\renewcommand\Hy@numberline[1]{#1. }
\titleformat{\section}[hang]
{\large\bfseries}{\thesection.}{0.48em}{}
\titleformat{\subsection}[runin]
{\normalsize\bfseries}{\thesubsection.}{0.48em}{}[.\MC]
\newtheorem{lemma}{Lemma}[section]
\newtheorem{theorem}{Theorem}[section]
\theoremstyle{definition}
\newtheorem{remark}{Remark}[section]
\numberwithin{equation}{section}
\date{}
\def \p {\partial}
\def \l {\lambda}
\def \g {\gamma}
\def \d {\cdot}
\def \n {\nabla}
\def \la {\Delta}
\def \ze {\xi}
\def \vt {\vartheta}
\def \wi {\widetilde}
\def \ha {\widehat}
\def \ca {\mathcal}
\def \bb {\mathbb}
\newcommand{\MC}{\!}
\newcommand{\ie}{i.e.}
\newcommand{\eg}{e.g.}
\newcommand{\et}{\textit{et al.}}
\newcommand{\norm}[1]{\left\lVert#1\right\rVert}
\newcommand{\abs}[1]{\left|#1\right|}
\newcommand{\xkh}[1]{\left(#1\right)}
\newcommand{\zkh}[1]{\left[#1\right]}
\newcommand{\dkh}[1]{\left\{#1\right\}}
\newcommand{\lan}[1]{\left\langle#1\right\rangle}
\renewcommand{\Re}{\operatorname{Re}}
\begin{document}

\title{Linear stability analysis of the Couette flow for the 2D Euler-Poisson system
\footnotetext{$^*$Corresponding author.}
\footnotetext{\hspace{4pt}\textit{E-mail addresses}:
\href{mailto:puxueke@gmail.com}{puxueke@gmail.com}(X. Pu),
\href{mailto:wywlzhou@163.com}{wywlzhou@163.com}(W. Zhou),
\href{mailto:biandongfen@bit.edu.cn}{biandongfen@bit.edu.cn}(D. Bian)}}

\author{Xueke Pu$^1$, \ \ \ \ \ \ Wenli Zhou$^1$ \ \ \ \ and \ \ \ \  Dongfen Bian$^{2,*}$\\[1.26ex]
\textit{\normalsize $^1$School of Mathematics and Information Science, Guangzhou University,}\\[0.32ex]
\textit{\normalsize Guangzhou 510006, PR China}\\[0.32ex]
\textit{\normalsize $^2$School of Mathematics and Statistics, Beijing Institute of Technology,}\\[0.32ex]
\textit{\normalsize Beijing 100081, PR China}}

\maketitle\thispagestyle{empty}\vspace{-8mm}

\begin{quote}\small
\textbf{Abstract} This paper is concerned with the linear stability analysis for the Couette flow of the Euler-Poisson system for both ionic fluid and electronic fluid in the domain $\bb{T}\times\bb{R}$. We establish the upper and lower bounds of the linearized solutions of the Euler-Poisson system near Couette flow. In particular, the inviscid damping for the solenoidal component of the velocity is obtained.

\textit{Keywords}: Euler-Poisson equations; Couette flow; inviscid damping

\textit{Mathematics Subject Classification}: 35M30; 76E05


\end{quote}

\section{Introduction}\label{se:1}

Consider the following two fluid Euler-Poisson system in a two dimensional domain $\bb{T}\times\bb{R}$,
\begin{equation}\label{2fluid}
\begin{split}
& \partial_t{\widetilde\eta_{\pm}} +\nabla\cdot (\widetilde\eta_{\pm}\widetilde v_{\pm})=0,\\
& \widetilde\eta_{\pm}m_{\pm}[\partial_t\widetilde v_{\pm}+\widetilde v_{\pm}\cdot\nabla \widetilde v_{\pm}] +T_{\pm}\nabla \widetilde p_{\pm} =\mp e \widetilde\eta_{\pm}\nabla\widetilde\phi,\\
& -\Delta\widetilde{\phi}=4\pi e [\widetilde\eta_+-\widetilde\eta_-],
\end{split}
\end{equation}
where ${\wi\eta_{\pm}}(x,y,t)$ and ${\wi v_{\pm}}(x,y,t)$ denote the density and the velocity of the ion fluid $(+)$ and the electron fluid $(-)$ in a plasma, $\n\wi{\phi}$ represents the self-consistent electric field, $\nabla {\widetilde p}_{\pm}$ denote the pressure of ions and electrons, $m_{\pm}$ denote the mass density of the ions and the electrons, $e$ is the charge for an electron and $T_{\pm}$ denote the effective temperatures of the ions and the electrons. Such a system is the simplified model from the two fluid Euler-Maxwell system as the light speed tends to infinity and is very fundamental in plasma physics. This Euler-Poisson system is also an important origin of many famous dispersive equations such as KdV, KP, NLS equations and is widely studied in the literature in recent years. See a series of papers of the first author and his collaborators \cite{Pu13,LP23, GP14,LP19,LBP24} for the rigorous mathematical justifications of these dispersive equations from the Euler-Poisson system. For more physics background and the mathematical development of such a system, one may see the concise review paper \cite{Guo2019} and the references cited therein.

The two-fluid Euler-Poisson system has rich and complex dynamics with distinct physical parameters. In particular, since the mass ratio of electrons and ions is of $10^{-3}\ll 1$, the system \eqref{2fluid} can be simplified to the following one-fluid Euler-Poisson system for the electron fluid dynamics of $({\wi\eta}_-,{\wi v}_-,\wi\phi)$ in an ion background (Langmuir waves, \cite{Guo1998,Guo2017})
\begin{flalign}
  &\p_t \wi{\eta}_-+\n \d (\wi{\eta}_-\wi{v}_-)=0,\label{eepfl:wie}\\
  &\wi{\eta}_-\zkh{\p_t \wi{v}_-+(\wi{v}_-\d\n)\wi{v}_-}
  +\frac{1}{m_-} \n {\wi p}_{-}(\wi{\eta}_-)=\frac{e}{m_-}\wi{\eta}_-\n\wi{\phi},\label{eepfl:npw}\\
  &\la\wi{\phi}=4 \pi e\wi{\eta}_{-}-4 \pi e.\label{eepfl:e0w}
\end{flalign}

On the other hand, assume $\widetilde p_{-}(\widetilde\eta_{-})=T_-\widetilde\eta_-$ in \eqref{2fluid} with a constant temperature $T_-$ and take formally $m_-/m_+\to0$ (such a limit is rigorously justified by Grenier \et~recently in \cite{GGPS19}), we deduce $\nabla\widetilde p_-=T_-\nabla\widetilde\eta_-=e\widetilde\eta_-\nabla\tilde\phi$, from which the celebrated Boltzmann relation for the electron density can be derived:
$$\widetilde\eta_-=\eta_0\exp{\{e\widetilde\phi/T_-\}},$$
where $\eta_0$ is a constant, set to be one for simplicity. Then we obtain the reduced Euler-Poisson system for ion dynamics for $(\widetilde\eta_+,\widetilde v_+,\widetilde\phi)$
\begin{flalign}
  &\p_t \wi{\eta}_{+}+\n\d (\wi{\eta}_{+}\wi{v}_{+})=0,\label{fl:wie}\\
  &\wi{\eta}_{+}\zkh{\p_t \wi{v}_{+}+(\wi{v}_{+}\d\n)\wi{v}_{+}}
  +\frac{T_+}{m_+} \n\wi{\eta}_{+}=-\frac{e}{m_+}\wi{\eta}_{+}\n\wi{\phi},\label{fl:npw}\\
  &\la\wi{\phi}=4 \pi e\zkh{\exp\xkh{\frac{e}{T_-}\wi{\phi}}-\wi{\eta}_{+}}.\label{fl:e0w}
\end{flalign}
For global well-posedness of such ion dynamics system in $\bb{R}^{3+1}$, one may refer to \cite{GP11}.

 In this paper, we are interested in another topic of this system, i.e., stability of the Couette flow of the above Euler-Poisson system. To be precise, we will consider the linear stability of the Couette flow for the two Euler-Poisson systems \eqref{eepfl:wie}-\eqref{eepfl:e0w} and \eqref{fl:wie}-\eqref{fl:e0w} in a periodic strip $\bb{T}\times\bb{R}$ with $\bb{T}=\bb{R}/\bb{Z}$. It is clear that there is a stationary solution $$(\wi{\eta}_s, \wi{v}_s, \wi{\phi}_s)=({1, (y,0)^{\top}, 0})$$ for both system (\ref{fl:wie})-(\ref{fl:e0w})  and \eqref{eepfl:wie}-\eqref{eepfl:e0w}. We also call this solution Couette flow, following the name in Navier-Stokes equation in fluid dynamics. Formal stability analysis of plasma equilibria to the related Poisson-Vlasov and Maxwell-Vlasov systems in one, two and three dimensions is referred to the paper of Holm {\it et al} \cite{HM1985}. To the best of our knowledge, no rigorous mathematical stability analysis of the Couette flow for the Euler-Poisson system either for the one-fluid Euler-Poisson system or the two-fluid Euler-Poisson system can be found in the literature and we believe that this topic is new and interesting for the Euler-Poisson system. The purpose of the present paper is to study the linear stability of the Couette flow for both the ion and electron one-fluid Euler-Poisson dynamics.

Let us deviate slightly to briefly introduce some related results on the stability analysis of the Couette flow in fluid dynamics, in particular for the Navier-Stokes or the Euler system.  Indeed, from the mathematical point of view, the stability of the Couette flow and related topics have been extensively studied and many fruitful results were obtained in the past decades.  For an incompressible fluid, the classical results on the linear stability analysis of the Couette flow were obtained by Rayleigh\cite{lr1879} and Kelvin\cite{lk1887}. Inviscid damping for the Euler equations on the domain $\bb{T}\times\bb{R}$ was studied by Bedrossian and Masmoudi\cite{jb2015}, which is a significant result on nonlinear stability of the planar Couette flow. For nonlinear inviscid damping for a class of monotone shear flows for the 2D Euler equation in finite channel for initial perturbation in Gevrey class with compact support, one may refer to \cite{Masmoudi-zhao}. See also \cite{Ionescu-Jia-CMP-2020} for the inviscid damping near the Couette flow in a channel for the 2D Euler equation, where the authors showed that the velocity field converges to a nearby shear flow when the initial perturbation is small in a suitable Gevrey sapce.  One may also refer to other nice work \cite{DM2023,Zill2017,IJ2020,Jia2020ARMA,Jia2020SIMA,LZ2011ARMA,MZ2022,WZZ2016} for the stability analysis of the Couette flow or general shear flows for the Euler equation in various situations.  
In the presence of viscosity, some interesting results have been obtained in the study of transition threshold problem related to enhanced dissipation (see, \eg, \cite{BP22,jb2017,pg2020,nm2022,fw2018,dw2021,qc2020,MZ2022} for various situations). More results on the stability of the two-dimensional and three-dimensional Couette flow in the Navier-Stokes equations at high Reynolds number can be found in the review paper\cite{nm2019}.

In the case of compressible fluid, the linear stability of plane Couette flow has been studied in the past decades in physics literature in the past nearly a century starting from the '40s \cite{LL1946,HM1985,DR2004}.  Glatzel \cite{wg1988} considered the inviscid stability of this problem based on a normal mode analysis, and the effects of viscosity were then taken into account by the same author \cite{wg1989}. Duck \et\cite{pw1994} investigated the compressible stability of the plane Couette flow for realistic compressible flow models, where they show that the details of the mean flow profile have a profound effect on the stability of the flow. Hu and Zhong\cite{sx1998} studied the linear stability of viscous supersonic plane Couette flow for a perfect gas governed by Sutherland viscosity law by using a fourth-order finite-difference method and a spectral collocation method. For rigorous mathematical analysis, when the initial value is sufficiently close to the plane Couette flow, Kagei\cite{yk2011} proved that the plane Couette flow is asymptotically stable provided that the Reynolds and Mach numbers are sufficiently small, see also Kagei\cite{ya2011,yk2012} for more general parallel flows. Subsequently, the constraint on the Reynolds number in \cite{yk2011}, which is used to ensure the stability of the plane Couette flow, was relaxed by Li and Zhang\cite{hx2017}. The inviscid damping and enhanced dissipation phenomena were detected by Antonelli \et\cite{pm2021,ADM2020} for the homogeneous Couette flow in a 2D isentropic compressible fluid. In addition, we refer to \cite{xz2023} for a inviscid damping result on the non-isentropic compressible Euler equations on the domain $\bb{T}\times\bb{R}$. For the 3D case, Zeng \et\cite{lz2022} established the linear stability result of the Couette flow in the isentropic compressible Navier-Stokes equations on the domain $\bb{T}\times\bb{R}\times\bb{T}$.  However, the rigorous mathematical study is less developed compared to the incompressible case,  and a lot of work is to be established in both the linear and the nonlinear level in the future. In particular, there is no stability analysis for the Couette flow for the Euler-Poisson system studied in this paper.

In the following, we will discuss the linear stability of the Couette flow for the ion Euler-Poisson system and the electron Euler-Poisson system separately. Compared to the classical Euler system and the Navier-Stokes system, the analysis includes some new insights, since the system is nonlocal due to the presence of the electric field.

\subsection{Ion dynamics case}
To study the linear stability for the Couette flow in system (\ref{fl:wie})-(\ref{fl:e0w}), we consider a perturbation around the above stationary solution, \ie, let
\begin{equation*}
  \wi{\eta}_{+}=\eta_{+}+\wi{\eta}_s, \quad \wi{v}_{+}=u_{+}+\wi{v}_s, \quad \wi{\phi}=\phi+\wi{\phi}_s.
\end{equation*}
We first consider the ion dynamics system \eqref{fl:wie}-\eqref{fl:e0w}. Then the linearized system around the stationary solution $(\wi{\eta}_s, \wi{v}_s, \wi{\phi}_s)$ for system (\ref{fl:wie})-(\ref{fl:e0w}) is written as
\begin{flalign}
  &\p_t \eta_{+}+y\p_x \eta_{+}+(\n \d u_{+})=0,\label{fl:ndu}\\
  &\p_t u_{+}+y\p_x u_{+}+\begin{pmatrix}u^y_{+}\\0\end{pmatrix}+\frac{T_+}{m_+} \n \eta_{+}=-\frac{e}{m_+}\n\phi,\label{fl:ptu}\\
  &\la \phi=4 \pi e \xkh{\frac{e}{T_-}\phi-\eta_{+}}.\label{fl:phi}
\end{flalign}
Let $\psi_{+}=\n \d u_{+}$ and $\omega_{+}=\n^\perp \d u_{+}$ with $\n^\perp=(-\p_y,\p_x)^{\top}$. Then the Helmholtz projection operators are defined as
\begin{equation}
  u_{+}=(u^x_{+},u^y_{+})^{\top}=\n\la^{-1}\psi_{+}+\n^\perp\la^{-1}\omega_{+}:=\bb{Q}[u_{+}]+\bb{P}[u_{+}].\label{eq:pqu}
\end{equation}
Then the linearized system (\ref{fl:ndu})-(\ref{fl:phi}) is transformed into the form with respect to $(\eta_{+},\psi_{+},\omega_{+})$
\begin{flalign}
  &\p_t \eta_{+}+y\p_x \eta_{+}+\psi_{+}=0,\label{fl:eta}\\
  &\p_t \psi_{+}+y\p_x \psi_{+}+2\p_x u^y_{+}+\frac{T_+}{m_+} \la \eta_{+}
  =-\frac{4\pi e^2}{m_+}\la\xkh{-\la+\frac{4\pi e^2}{T_-}}^{-1}\eta_{+},\label{fl:psi}\\
  &\p_t \omega_{+}+y\p_x\omega_{+}-\psi_{+}=0.\label{fl:ome}
\end{flalign}

Denote the average of a function in the $x$ direction by
\begin{equation*}
  f_a(y)=\frac{1}{2\pi}\int_{\bb{T}}f(x,y)dx.
\end{equation*}We take the $x$-average for equations (\ref{fl:eta}), (\ref{fl:psi}) and (\ref{fl:ome}) to obtain the following system
\begin{flalign}
  &\p_t \eta_{+,a}+\psi_{+,a}=0,\label{fl:aa1}\\
  &\p_t \psi_{+,a}+\frac{T_+}{m_+} \p_{yy} \eta_{+,a}=-\frac{4\pi e^2}{m_+}\la\xkh{-\la+\frac{4\pi e^2}{T_-}}^{-1}\eta_{+,a},\label{fl:aa2}\\
  &\p_t \omega_{+,a}-\psi_{+,a}=0.\label{fl:aa3}
\end{flalign}Combining (\ref{fl:aa1}) with (\ref{fl:aa3}) yields $\p_t\xkh{\eta_{+,a}+\omega_{+,a}}=0$.
It is easy to see that
\begin{equation}\label{eq:ain}
  \eta_{+,a}+\omega_{+,a}=\eta^{in}_{+,a}+\omega^{in}_{+,a},
\end{equation}where $(\eta^{in}_{+,a},\omega^{in}_{+,a})$ represents the initial data. In the rest of the paper, we still use similar notation to represent the initial data.

According to (\ref{fl:aa1}) and (\ref{fl:aa2}), we get
\begin{equation}\label{eq:ptt}
  \p_{tt} \eta_{+,a}-\frac{T_+}{m_+} \p_{yy} \eta_{+,a}-\frac{4\pi e^2}{m_+}\la\xkh{-\la+\frac{4\pi e^2}{T_-}}^{-1}\eta_{+,a}=0~in~\bb{R}.
\end{equation}Next we supply system (\ref{fl:aa1})-(\ref{fl:aa3}) with the initial condition
\begin{equation}\label{eq:in0}
  \xkh{\eta_{+,a},\psi_{+,a},\omega_{+,a}}|_{t=0}=\xkh{\eta^{in}_{+,a},\psi^{in}_{+,a},\omega^{in}_{+,a}}=(0,0,0).
\end{equation}Then $\xkh{\eta_{+,a},\psi_{+,a},\omega_{+,a}}=(0,0,0)$ is the unique solution to system (\ref{fl:aa1})-(\ref{fl:aa3}) with initial condition (\ref{eq:in0}). In fact, taking the $L^2(\bb{R})$ inner product of (\ref{eq:ptt}) with $\p_t \eta_{+,a}$, then it follows from integration by parts that
\begin{equation*}
  \frac{1}{2}\frac{d}{dt}\xkh{\norm{\p_t \eta_{+,a}}^2_{L^2}+\frac{T_+}{m_+}\norm{\p_y \eta_{+,a}}^2_{L^2}
  +\frac{4\pi e^2}{m_+}\norm{\n\xkh{-\la+\frac{4\pi e^2}{T_-}}^{-\frac{1}{2}}\eta_{+,a}}^2_{L^2}}=0,
\end{equation*}which implies that $\eta_{+,a}=0$ is the unique solution to the cauchy problem, equation (\ref{eq:ptt}) with $\eta^{in}_{+,a}=0$. By virtue of (\ref{eq:ain}) and (\ref{fl:aa1}), we deduce that $\xkh{\psi_{+,a},\omega_{+,a}}=(0,0)$. Based on this fact, for simplicity of notation, we study the dynamics of $(\eta_{+},\psi_{+},\omega_{+})$ with $\xkh{\eta^{in}_{+,a},\psi^{in}_{+,a},\omega^{in}_{+,a}}=(0,0,0)$, instead of studying the dynamics for $(\eta_{+}-\eta_{+,a},\psi_{+}-\psi_{+,a},\omega_{+}-\omega_{+,a})$.

The main results for the ion dynamics are as follows.
\begin{theorem}\label{th:uqp}
Suppose that $(\eta^{in}_{+},\omega^{in}_{+}) \in H^1_x H^2_y$ and that $\psi^{in}_{+} \in H^{-\frac{1}{2}}_x L^2_y$ with $\xkh{\eta^{in}_{+,a},\psi^{in}_{+,a},\omega^{in}_{+,a}}=(0,0,0)$. Let $(\eta_{+},u_{+},\phi)$ be a smooth solution for the system \eqref{fl:ndu}-\eqref{fl:phi}. Then the following estimates hold
\begin{flalign}
  \norm{\bb{P}[u_{+}]^x(t)}_{L^2}+\norm{\phi(t)}_{L^2} &\lesssim \sqrt{\frac{m_+}{T_+}}\frac{1}{\lan{t}^\frac{1}{2}}\xkh{\norm{\sqrt{\frac{T_+}{m_+}}
  \eta^{in}_{+}}_{H^{-\frac{1}{2}}_x L^2_y}+\norm{\eta^{in}_{+}+\omega^{in}_{+}}_{H^{-\frac{1}{2}}_x H^{\frac{1}{2}}_y}}\nonumber\\
  &\quad+\frac{1}{\lan{t}}\norm{\eta^{in}_{+}+\omega^{in}_{+}}_{H^{-1}_x H^{1}_y}
  +\sqrt{\frac{m_+}{T_+}}\frac{1}{\lan{t}^\frac{1}{2}}\norm{\psi^{in}_{+}}_{H^{-\frac{1}{2}}_x H^{-1}_y},\label{fl:pux}
\end{flalign}
\begin{flalign}
  \norm{\bb{P}[u_{+}]^y(t)}_{L^2} &\lesssim \sqrt{\frac{m_+}{T_+}}\frac{1}{\lan{t}^\frac{3}{2}}\xkh{\norm{\sqrt{\frac{T_+}{m_+}}
  \eta^{in}_{+}}_{H^{-\frac{1}{2}}_x H^1_y}+\norm{\eta^{in}_{+}+\omega^{in}_{+}}_{H^{-\frac{1}{2}}_x H^{\frac{3}{2}}_y}}\nonumber\\
  &\quad+\frac{1}{\lan{t}^2}\norm{\eta^{in}_{+}+\omega^{in}_{+}}_{H^{-1}_x H^{2}_y}
  +\sqrt{\frac{m_+}{T_+}}\frac{1}{\lan{t}^\frac{3}{2}}\norm{\psi^{in}_{+}}_{H^{-\frac{1}{2}}_x L^2_y},\label{fl:puy}
\end{flalign}and
\begin{flalign}
  &\norm{\bb{Q}[u_{+}](t)}_{L^2}+\sqrt{\frac{T_+}{m_+}}\norm{\eta_{+}(t)}_{L^2}\nonumber\\
  &\qquad\lesssim \lan{t}^\frac{1}{2}\xkh{\norm{\sqrt{\frac{T_+}{m_+}}\eta^{in}_{+}}_{L^2}
  +\norm{\psi^{in}_{+}}_{H^{-1}}+\norm{\eta^{in}_{+}+\omega^{in}_{+}}_{H^1}}.\label{fl:qut}
\end{flalign}
\end{theorem}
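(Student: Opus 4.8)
My plan is to conjugate away the Couette transport by passing to the moving frame $z=x-ty$ and Fourier-transforming in $(z,y)$, so that for each fixed horizontal frequency $k\in\bb{Z}\setminus\{0\}$ (the $k=0$ mode vanishes by the hypothesis on the data) and each $\xi\in\bb{R}$ the linearized system becomes a scalar non-autonomous ODE in $t$: there $\p_t+y\p_x$ becomes $\p_t$ and $-\la$ acts as multiplication by $p(t):=k^2+(\xi-kt)^2\ge k^2\ge1$. The first thing I would record is that, by \eqref{fl:eta}--\eqref{fl:ome}, the combination $\eta_++\omega_+$ is transported by the Couette flow, so its profile is conserved and its Fourier coefficient stays equal to $\widehat{(\eta^{in}_++\omega^{in}_+)}(k,\xi)$. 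Using this to eliminate $\omega_+$, the profiles $(\ha\eta_+,\ha\psi_+)$ solve a closed $2\times2$ linear system forced only by $\widehat{(\eta^{in}_++\omega^{in}_+)}$, and since $\ha\psi_+=-\p_t\ha\eta_+$ this reduces to
\[
\p_{tt}\ha\eta_+-\tfrac{p'}{p}\,\p_t\ha\eta_++A(t)\,\ha\eta_+=\tfrac{2k^2}{p}\,\widehat{(\eta^{in}_++\omega^{in}_+)},\qquad A(t)=\tfrac{2k^2}{p}+\tfrac{T_+}{m_+}p+\tfrac{4\pi e^2}{m_+}\tfrac{p}{p+c},
\]
$c=4\pi e^2/T_-$. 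The substitution $\ha\eta_+=\sqrt p\,h$ kills the first-order term and yields a perturbed oscillator $\p_{tt}h+\Omega^2(t)\,h=\tfrac{2k^2}{p^{3/2}}\widehat{(\eta^{in}_++\omega^{in}_+)}$ with $\Omega^2(t)=\tfrac{T_+}{m_+}p+\tfrac{4\pi e^2}{m_+}\tfrac{p}{p+c}+\tfrac{3k^4}{p^2}$, so $\tfrac{T_+}{m_+}p\le\Omega^2\lesssim p$.

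The hard part will be a sharp estimate for this oscillator. The natural energy $E=\abs{\p_t h}^2+\Omega^2\abs h^2$ obeys $E'=(\Omega^2)'\abs h^2+2\Re(\overline{\p_t h}\,F)$, but a Grönwall argument on $E/\Omega$ that merely uses $\abs{\p_t h}^2,\Omega^2\abs h^2\le E$ only gives $E(t)\lesssim E(0)\,p(t)/k^2$, hence \emph{linear} growth of $\ha\eta_+$ instead of the claimed $\lan{t}^{1/2}$. The extra decay is purely oscillatory: written via $\p_t h\mp i\Omega h$, the bad term is $-\tfrac{\Omega'}{\Omega^2}(\abs{\p_t h}^2-\Omega^2\abs h^2)$, and since $\abs{\p_t h}^2-\Omega^2\abs h^2=\tfrac{d}{dt}\Re(\bar h\,\p_t h)-\Re(\bar h\,F)$, the corrected functional $\wi E:=\tfrac{E}{\Omega}+\tfrac{\Omega'}{\Omega^2}\Re(\bar h\,\p_t h)$ satisfies $\wi E'=\bigl(\tfrac{\Omega'}{\Omega^2}\bigr)'\Re(\bar h\,\p_t h)+\tfrac{\Omega'}{\Omega^2}\Re(\bar h\,F)$. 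I would then check that $\int_0^\infty\bigl|\bigl(\tfrac{\Omega'}{\Omega^2}\bigr)'\bigr|\,dt\lesssim\sqrt{m_+/T_+}\,|k|^{-1}$ and that the correction is a controlled fraction of $E/\Omega$ (iterating the correction if needed), which closes the estimate and gives, uniformly in $(k,\xi)$ up to parameter constants,
\[
\abs{\ha\eta_+(t)}\lesssim\Bigl(\tfrac{p(t)}{p(0)}\Bigr)^{1/4}\Bigl(\abs{\ha\eta^{in}_+}+\tfrac{1}{\sqrt{p(0)}}\abs{\ha\psi^{in}_+}\Bigr)+\text{(Duhamel term)},
\]
the Duhamel term being bounded with $\int_0^\infty p(s)^{-1}\,ds\lesssim|k|^{-1}$. (Equivalently one may run WKB/Liouville--Green: $h$ solves a Weber-type equation whose potential $p/|k|$ has no turning point, so the WKB error is $O(|k|^{-1})$ uniformly.) The real difficulty is to pass through the ``critical time'' $t\approx\xi/k$, where $p\approx k^2$ and the oscillation slows, losing only the factor $\lan{\xi/k}$ — which I would then absorb into the anisotropic norms on the right-hand side — rather than a full power of $\lan{t}$.

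Finally I would transfer these bounds to the physical quantities. From the Helmholtz decomposition, on the moving frame,
\[
\widehat{\bb{P}[u_+]^x}=\tfrac{i(\xi-kt)}{p}\ha\omega_+,\quad\widehat{\bb{P}[u_+]^y}=-\tfrac{ik}{p}\ha\omega_+,\quad\widehat{\bb{Q}[u_+]}=-\tfrac{i(k,\xi-kt)}{p}\ha\psi_+,\quad\ha\phi=\tfrac{4\pi e}{p+c}\ha\eta_+,
\]
with $\ha\omega_+=\widehat{(\eta^{in}_++\omega^{in}_+)}-\ha\eta_+$ and $\ha\psi_+=-\p_t\ha\eta_+$. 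Plugging in the ODE bounds for $\ha\eta_+$ and $\ha\psi_+$, using the symbol estimates $\tfrac{|\xi-kt|}{p}\le p^{-1/2}\le|k|^{-1}$, $\tfrac{|k|}{p}\le p^{-1}$, $\tfrac{1}{p+c}\le p^{-1}$ and the elementary inequality $\lan{kt}\lesssim\lan{\xi}\lan{\xi-kt}$ (which converts smallness of a symbol into powers of $\lan{t}$ at the cost of $y$-regularity), and then applying Plancherel in $\xi$ and summing in $k$, I expect \eqref{fl:pux}, \eqref{fl:puy} and \eqref{fl:qut} to come out. The growth in \eqref{fl:qut} reflects the $(p(t)/p(0))^{1/4}\sim(|k|\lan{t})^{1/2}/p(0)^{1/4}$ growth of $\ha\eta_+$ and of $\ha\psi_+/\sqrt p$; the rates $\lan{t}^{-1/2}$ and $\lan{t}^{-3/2}$ in \eqref{fl:pux}--\eqref{fl:puy} are produced by the extra factors $(\xi-kt)/p$, $k/p$ in $\bb{P}[u_+]$ (and $1/(p+c)$ in $\ha\phi$); and the $\sqrt{m_+/T_+}$ prefactors trace back to $\Omega^2\ge\tfrac{T_+}{m_+}p$.
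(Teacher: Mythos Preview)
Your strategy coincides with the paper's: pass to the comoving frame, Fourier--transform so that the problem becomes a forced non-autonomous $2\times2$ ODE at each $(k,\xi)$, use the conservation of $\widehat{\eta_++\omega_+}$ to isolate the forcing, prove by an energy-with-cross-term (adiabatic-invariant) argument that the homogeneous propagator is uniformly bounded, absorb the forcing via Duhamel (the paper gets $\int_0^\infty 2k^2\alpha^{-7/4}\,ds\lesssim1$), and then read off \eqref{fl:pux}--\eqref{fl:qut} from symbol arithmetic and Plancherel. The only substantive difference is the reduced variable: you put the second-order equation in Liouville normal form via $\widehat\eta_+=\sqrt{p}\,h$, producing the frequency $\Omega^2=\tfrac{T_+}{m_+}p+\tfrac{4\pi e^2}{m_+}\tfrac{p}{p+c}+\tfrac{3k^4}{p^2}$ and the corrected energy $\widetilde E=E/\Omega+(\Omega'/\Omega^2)\Re(\bar h\,\partial_th)$; the paper instead keeps the first-order pair $A=\bigl(\sqrt{T_+/m_+}\,\widehat\Pi_+\,\alpha^{-1/4},\ \widehat\Psi_+\,\alpha^{-3/4}\bigr)$ and uses the energy $\mathcal E_+=\lambda_+|A_1|^2+\tfrac{2h_1}{\gamma_+}\Re(A_1\bar A_2)+\lambda_+^{-1}|A_2|^2$ with $\gamma_+^2=\tfrac{T_+}{m_+}\alpha+\tfrac{4\pi e^2}{m_+}\tfrac{\alpha}{\alpha+c}+\tfrac{2k^2}{\alpha}$. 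The total-variation control of the cross-term coefficient (you: $\int|(\Omega'/\Omega^2)'|\,dt$; paper: $\int|\tfrac{d}{dt}(h_1/\gamma_+)|\,dt$) is done in the paper exactly as you suggest, by counting the finitely many sign changes of a rational expression.

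There is, however, a genuine gap in your coercivity step. For your $\widetilde E$ to control $E/\Omega$ you need $|\Omega'|/(2\Omega^2)<1$ uniformly in $(k,\xi,t)$; but the passage to Liouville form replaces the $2k^2/p$ contribution by $3k^4/p^2$, and with this weaker lower term the ratio can exceed $1$. For instance with $k=1$, the Poisson term dropped, and $T_+/m_+\le 10^{-4}$, at $p\approx5$ one computes $|\Omega'|/(2\Omega^2)>1$, so $\widetilde E$ is not equivalent to $E/\Omega$ and the Gr\"onwall closure fails; ``iterating the correction'' does not help because the adiabatic parameter itself is not small there. The paper's scaling avoids this cleanly: using only $\gamma_+^2\ge 2k^2/\alpha$ one gets $|h_1|/\gamma_+\le\tfrac{\sqrt2}{2}$ for \emph{all} values of $T_+,m_+,e,T_-$, so coercivity is parameter-free (the physical constants enter only through $\lambda_+$ and hence only in the final implicit constants). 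The fix is simply to adopt the $\alpha^{-1/4},\alpha^{-3/4}$ weighting for $(\widehat\Pi_+,\widehat\Psi_+)$ instead of passing to $h$; with that change the remainder of your outline matches the paper essentially line by line. (Minor point: your formula for $\widetilde E'$ omits the forcing contribution $\tfrac{2}{\Omega}\Re(\overline{\partial_th}\,F)$, but since you treat the inhomogeneous problem by Duhamel this does not matter.)
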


\begin{theorem}\label{th:que}
Assume that $\xkh{\eta^{in}_{+},\omega^{in}_{+}} \in L^2_xH^{-\frac{1}{2}}_y$ and that $\psi^{in}_{+} \in H^{-\frac{3}{2}}_xH^{-2}_y$ with $\xkh{\eta^{in}_{+,a},\psi^{in}_{+,a},\omega^{in}_{+,a}}=(0,0,0)$. Let $(\eta_{+},u_{+},\phi)$ be a smooth solution for the system \eqref{fl:ndu}-\eqref{fl:phi}. Then it holds that
\begin{equation*}
  \norm{\bb{Q}[u_{+}](t)}_{L^2}+\sqrt{\frac{T_+}{m_+}}\norm{\eta_{+}(t)}_{L^2}
  \gtrsim \ca{C}^{in}_{\delta}\xkh{\eta^{in}_{+},\psi^{in}_{+},\omega^{in}_{+}}\lan{t}^{\frac{1}{2}},
\end{equation*}where the constant $\ca{C}^{in}_{\delta}\xkh{\eta^{in}_{+},\psi^{in}_{+},\omega^{in}_{+}}$ is a suitable combination of $\norm{\eta^{in}_{+}}_{L^2_xH^{-\frac{1}{2}}_y}$, $\norm{\psi^{in}_{+}}_{H^{-\frac{3}{2}}_xH^{-2}_y}$ and $\norm{\omega^{in}_{+}}_{L^2_xH^{-\frac{1}{2}}_y}$.
\end{theorem}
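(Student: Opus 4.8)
The plan is to run the same Fourier/Lagrangian reduction underlying Theorem~\ref{th:uqp} and then to extract a lower bound for the relevant energy from a WKB (adiabatic invariant) analysis of the resulting ordinary differential equation. First I would take the Fourier transform of \eqref{fl:eta}--\eqref{fl:ome} in $x$ and, passing to the coordinate $z=x-ty$ moving with the Couette flow and Fourier transforming in $(z,y)$ with dual variables $(k,\xi)$, note that $\p_t+y\p_x$ becomes $\p_t$ while the symbol of $-\la$ becomes $p(t):=k^2+(\xi-tk)^2$, which never vanishes. For each fixed $k\ne0$ (the mode $k=0$ being frozen by the hypothesis $\xkh{\eta^{in}_{+,a},\psi^{in}_{+,a},\omega^{in}_{+,a}}=(0,0,0)$), the system becomes a linear ODE in $t$; since $\p_t(\eta_++\omega_+)=0$ one eliminates $\omega_+$ and is left with
\[
  \p_{tt}\eta_+-\frac{p'(t)}{p(t)}\,\p_t\eta_++a(t)\,\eta_+=\frac{2k^2}{p(t)}\xkh{\eta^{in}_++\omega^{in}_+},\qquad \psi_+=-\p_t\eta_+,
\]
with $a(t)=\frac{T_+}{m_+}p(t)+\frac{4\pi e^2}{m_+}\frac{p(t)}{p(t)+4\pi e^2/T_-}+\frac{2k^2}{p(t)}$. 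Since the shear $z=x-ty$ is measure preserving, Plancherel shows that the quantity to be bounded below equals $\int_{\bb{R}^2}\mathcal F(t;k,\xi)\,dk\,d\xi$, where $\mathcal F(t;k,\xi):=p(t)^{-1}\abs{\psi_+(k,\xi,t)}^2+\frac{T_+}{m_+}\abs{\eta_+(k,\xi,t)}^2$.

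Next I would substitute $\eta_+=\sqrt{p(t)}\,\nu$ to remove the first-order term, turning the ODE into $\nu''+\Omega(t)^2\nu=2k^2p(t)^{-3/2}\xkh{\eta^{in}_++\omega^{in}_+}$ with $\Omega(t)^2=\frac{T_+}{m_+}p(t)+O(1)$, so that $\Omega(t)\sim\sqrt{T_+/m_+}\,\abs{k}\,t$ as $t\to\infty$. Fix a frequency region $\Lambda_\delta$ away from $\{k=0\}$, say $\Lambda_\delta=\dkh{(k,\xi):\,\delta\le\abs{k}\le\delta^{-1},\ \abs{\xi}\le\delta^{-1}}$; there is a time $t_\delta$, uniform over $\Lambda_\delta$, past which $\Omega'/\Omega^2$ is integrable in $t$. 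For the homogeneous flow the adiabatic invariant
\[
  J(t):=\frac{\abs{\nu'(t)}^2+\Omega(t)^2\abs{\nu(t)}^2}{\Omega(t)}
\]
then satisfies $\frac{d}{dt}J=\frac{\Omega'}{\Omega^2}\bigl(\Omega^2\abs{\nu}^2-\abs{\nu'}^2\bigr)+(\text{forcing})$, and since $\Omega^2\abs{\nu}^2-\abs{\nu'}^2$ carries the fast phase $2\int\Omega$, a non-stationary-phase (integration by parts) argument shows that the first term produces only an $O(1)$ relative change over $[t_\delta,\infty)$; the forcing is handled by Duhamel, and since $p^{-3/2}\in L^1_t$ uniformly on $\Lambda_\delta$ it feeds $\nu$ at the same $\Omega(t)^{-1/2}$ size as the homogeneous part. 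Hence $J(t)$ converges to a limit as $t\to\infty$, while on the bounded interval $[0,t_\delta]$ the ODE has coefficients bounded uniformly on $\Lambda_\delta$, so Gronwall controls the transient regime and links $J$ to the initial data.

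The last step is to read off $\mathcal F$. Undoing $\eta_+=\sqrt p\,\nu$ gives $\mathcal F(t;k,\xi)=\abs{\nu'(t)}^2+\Omega(t)^2\abs{\nu(t)}^2+(\text{genuinely smaller terms})=\Omega(t)J(t)\bigl(1+o(1)\bigr)$; the key structural point is that $\mathcal F$ is \emph{exactly} the non-oscillating combination of ``kinetic'' and ``potential'' energy, so no cancellation in $t$ occurs and
\[
  \mathcal F(t;k,\xi)\gtrsim \sqrt{T_+/m_+}\,\abs{k}\,\lan{t}\,\gamma(k,\xi)\qquad\text{for all }(k,\xi)\in\Lambda_\delta,\ t\ge0,
\]
where $\gamma(k,\xi)$ is the squared asymptotic amplitude of the solution --- a nonnegative (and generically nondegenerate) quadratic form in $\bigl(\widehat{\eta^{in}_+},\widehat{\psi^{in}_+},\widehat{\omega^{in}_+}\bigr)(k,\xi)$ whose coefficients, once restricted to the bounded set $\Lambda_\delta$, are controlled by the weights defining $\norm{\cdot}_{L^2_xH^{-1/2}_y}$, $\norm{\cdot}_{H^{-3/2}_xH^{-2}_y}$ and $\norm{\cdot}_{L^2_xH^{-1/2}_y}$. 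Integrating over $\Lambda_\delta$ and using the Plancherel identity yields
\[
  \norm{\bb{Q}[u_+](t)}_{L^2}^2+\frac{T_+}{m_+}\norm{\eta_+(t)}_{L^2}^2\gtrsim \lan{t}\,\bigl(\mathcal C^{in}_\delta\bigr)^2,\qquad \bigl(\mathcal C^{in}_\delta\bigr)^2:=\int_{\Lambda_\delta}\sqrt{T_+/m_+}\,\abs{k}\,\gamma(k,\xi)\,dk\,d\xi,
\]
and $\mathcal C^{in}_\delta$ is then the advertised combination of $\norm{\eta^{in}_+}_{L^2_xH^{-1/2}_y}$, $\norm{\psi^{in}_+}_{H^{-3/2}_xH^{-2}_y}$ and $\norm{\omega^{in}_+}_{L^2_xH^{-1/2}_y}$ (restricted to $\Lambda_\delta$); taking square roots finishes the proof.

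I expect the middle step to be the principal obstacle: making the adiabatic invariance \emph{quantitative and uniform} over $\Lambda_\delta$, in particular controlling the non-stationary-phase error so that the lower bound holds for \emph{every} $t$ rather than merely along a sequence of times, gluing the transient interval $[0,t_\delta]$ to the WKB regime, and tracking both the nonlocal Poisson contribution $\frac{4\pi e^2}{m_+}\frac{p}{p+4\pi e^2/T_-}$ and the possible destructive interference between the homogeneous solution and the forced part --- this last point being exactly why $\mathcal C^{in}_\delta$ must be a carefully chosen combination of the three norms rather than their sum. Since the upper bound of Theorem~\ref{th:uqp} comes from the very same WKB analysis with the inequalities reversed, it is natural to set up the two proofs in parallel.
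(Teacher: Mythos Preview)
Your reduction to a second--order ODE in the moving frame, the identification of $\mathcal F(t;k,\xi)=\alpha^{1/2}\abs{A(t)}^2$ as the relevant pointwise density, and the final Plancherel--weight manipulation $\alpha^{1/2}\ge\lan{\xi-kt}\gtrsim\lan{kt}/\lan{\xi}$ are all the same as in the paper. The route you choose in the middle, however, is genuinely different and heavier than what the paper does.

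The paper does \emph{not} pass to a second--order equation, does not invoke a WKB ansatz or a non-stationary-phase argument, and does not restrict to a frequency box $\Lambda_\delta$. Instead it stays with the first--order $2\times2$ system for $A=(A_1,A_2)$ and defines an energy with a carefully chosen cross term,
\[
\mathcal E_+(t)=\lambda_+\abs{A_1}^2+\frac{2h_1}{\gamma_+}\Re(A_1\bar A_2)+\frac{1}{\lambda_+}\abs{A_2}^2,
\]
designed so that when one differentiates, the leading ``fast'' contributions cancel \emph{algebraically} and what remains is
\[
\frac{d\mathcal E_+}{dt}=\Bigl(\tfrac{d}{dt}\log\lambda_+\Bigr)\bigl(\lambda_+\abs{A_1}^2-\tfrac{1}{\lambda_+}\abs{A_2}^2\bigr)+2\Re(A_1\bar A_2)\,\tfrac{d}{dt}\Bigl(\tfrac{h_1}{\gamma_+}\Bigr).
\]
Both coefficients $\log\lambda_+$ and $h_1/\gamma_+$ have \emph{uniformly bounded total variation in $t$} (the first because $1\le\lambda_+^2\le 1+4\pi e^2/T_++2m_+/T_+$, the second because it is a rational function of $t$ with a fixed number of sign changes and $\abs{h_1/\gamma_+}\le \sqrt2/2$). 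A single Gr\"onwall then gives two--sided bounds $\mathcal E_+(t)\approx\mathcal E_+(0)$, hence $\abs{A(t)}\approx\abs{A^{in}}$, uniformly in $(k,\xi)$. With the lower bound of Lemma~\ref{le:aet} in hand, the proof of the theorem is two lines: write $A(t)=S_L(t,0)\mathcal R(t)$ with $\mathcal R(t)=A^{in}+\int_0^tS_L(0,s)M_+(s)\widehat{F^{in}_+}\,ds$, apply $\abs{A(t)}\gtrsim\abs{\mathcal R(t)}$, and integrate against $\alpha^{1/2}$.

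Compared with your plan, the cross term $\tfrac{2h_1}{\gamma_+}\Re(A_1\bar A_2)$ is precisely the device that eliminates the ``principal obstacle'' you flagged: it replaces the oscillatory integral/non-stationary-phase step by an algebraic cancellation, so no frequency localisation and no large--time/transient gluing are needed. A further difference is the nature of $\mathcal C^{in}_\delta$: in the paper it is $\norm{\mathcal R(t,A^{in},\widehat{F^{in}_+})}_{L^2_xH^{-1/2}_y}$, which is allowed to depend on $t$ (and may vanish at isolated times, cf.\ the remark following the theorem), whereas your $\gamma(k,\xi)$ is a $t$--independent asymptotic amplitude. Your conclusion would thus be formally sharper on generic data but costs you the extra WKB machinery; the paper's version is weaker at individual times but comes essentially for free once the cross--term energy is set up.
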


The above two theorems give upper and lower bounds of the linearized solutions of the Euler-Poisson system near Couette flow. In particular, the inviscid damping for the solenoidal component of the velocity is established, while the $L^2$-norm of the velocity grows as $t^{1/2}$. We also remark that it may happen that the constant $\ca{C}^{in}_{\delta}$ vanishes at some positive time $t>0$, however we can prove that the set of initial data for which the RHS of the lower bound in Theorem \ref{th:que} vanishes at some time has empty interior in any Sobolev space in which the  initial data are taken. The details are similar to that for the Euler system in \cite{pm2021} and are omitted here.

\subsection{Electron dynamics case}
To study the linear stability for the Couette flow in system (\ref{eepfl:wie})-(\ref{eepfl:e0w}), we also carry out a perturbation around the above stationary solution, \ie,
\begin{equation*}
  \wi{\eta}_{-}=\eta_{-}+\wi{\eta}_s, \quad \wi{v}_{-}=u_{-}+\wi{v}_s, \quad \wi{\phi}=\phi+\wi{\phi}_s.
\end{equation*}
Then the linearized system around the stationary solution $(\wi{\eta}_s, \wi{v}_s, \wi{\phi}_s)$ for system (\ref{eepfl:wie})-(\ref{eepfl:e0w}) can be rewritten as
\begin{flalign}
  &\p_t \eta_{-}+y\p_x \eta_{-}+(\n \d u_{-})=0,\label{eepfl:ndu}\\
  &\p_t u_{-}+y\p_x u_{-}+\begin{pmatrix}u^y_{-}\\0\end{pmatrix}+\frac{1}{m_-} \n \eta_{-}=\frac{e}{m_-}\n\phi,\label{eepfl:ptu}\\
  &\la\phi=4 \pi e \eta_{-}.\label{eepfl:phi}
\end{flalign}
Let $\psi_{-}=\n \d u_{-}$ and $\omega_{-}=\n^\perp \d u_{-}$ with $\n^\perp=(-\p_y,\p_x)^{\top}$. Then the Helmholtz projection operators are defined as
\begin{equation}
  u_{-}=(u^x_{-},u^y_{-})^{\top}=\n\la^{-1}\psi_{-}+\n^\perp\la^{-1}\omega_{-}:=\bb{Q}[u_{-}]+\bb{P}[u_{-}].\label{eepeq:pqu}
\end{equation}
The linearized system (\ref{eepfl:ndu})-(\ref{eepfl:phi}) is rewritten as (in the variables $(\eta_{-},\psi_{-},\omega_{-})$)
\begin{flalign}
  &\p_t \eta_{-}+y\p_x \eta_{-}+\psi_{-}=0,\label{eepfl:eta}\\
  &\p_t \psi_{-}+y\p_x \psi_{-}+2\p_x u^y_{-}+\frac{1}{m_-} \la \eta_{-}=\frac{4\pi e^2}{m_-}\eta_{-},\label{eepfl:psi}\\
  &\p_t \omega_{-}+y\p_x\omega_{-}-\psi_{-}=0.\label{eepfl:ome}
\end{flalign}

Just as in the ion dynamics case, we take the $x$-average for equations (\ref{eepfl:eta}), (\ref{eepfl:psi}) and (\ref{eepfl:ome}) to obtain the following system
\begin{flalign}
  &\p_t \eta_{-,a}+\psi_{-,a}=0,\label{eepfl:aa1}\\
  &\p_t \psi_{-,a}+\frac{1}{m_-} \p_{yy} \eta_{-,a}=\frac{4\pi e^2}{m_-}\eta_{-,a},\label{eepfl:aa2}\\
  &\p_t \omega_{-,a}-\psi_{-,a}=0.\label{eepfl:aa3}
\end{flalign}
Again, combining (\ref{eepfl:aa1}) with (\ref{eepfl:aa3}) yields $\p_t\xkh{\eta_{-,a}+\omega_{-,a}}=0$ and hence
\begin{equation}\label{eepeq:ain}
  \eta_{-,a}+\omega_{-,a}=\eta^{in}_{-,a}+\omega^{in}_{-,a},
\end{equation}
where $(\eta^{in}_{-,a},\omega^{in}_{-,a})$ represents the initial data. According to (\ref{eepfl:aa1}) and (\ref{eepfl:aa2}), we get
\begin{equation}\label{eepeq:ptt}
  \p_{tt} \eta_{-,a}-\frac{1}{m_-} \p_{yy} \eta_{-,a}+\frac{4\pi e^2}{m_-}\eta_{-,a}=0~in~\bb{R},
\end{equation}
which is the famous Klein-Gordon equation. The fact that at the linearized level (near the constant solution), the electron Euler-Poisson system satisfies the Klein-Gordon equation was found by Guo \cite{Guo1998} and the global irrotational flows in the large to the electron Euler-Poisson system was hence obtained in $\bb{R}^{3+1}$. For global well-posedness of the lower dimensional Euler-Poisson system, one may refer to \cite{JLZ2014,Jang2012,IP2013,LW2014}.

Similar to the ion dynamics case,  we supply system (\ref{eepfl:aa1})-(\ref{eepfl:aa3}) with the initial condition
\begin{equation}\label{eepeq:in0}
  \xkh{\eta_{-,a},\psi_{-,a},\omega_{-,a}}|_{t=0}=\xkh{\eta^{in}_{-,a},\psi^{in}_{-,a},\omega^{in}_{-,a}}=(0,0,0).
\end{equation}
From (\ref{eepeq:ain}) and the explicit representation formula for the Klein-Gordon equation, we deduce that $\xkh{\eta_{-,a},\psi_{-,a},\omega_{-,a}}=(0,0,0)$ is the unique solution to system (\ref{eepfl:aa1})-(\ref{eepfl:aa3}) with initial condition (\ref{eepeq:in0}). Based on this fact, for simplicity of notation, we study the dynamics of $(\eta_{-},\psi_{-},\omega_{-})$ with $\xkh{\eta^{in}_{-,a},\psi^{in}_{-,a},\omega^{in}_{-,a}}=(0,0,0)$, instead of studying the dynamics for $(\eta_{-}-\eta_{-,a},\psi_{-}-\psi_{-,a},\omega_{-}-\omega_{-,a})$.

The main results for the linear stability of the electron Euler-Poisson system are stated as follows.
\begin{theorem}\label{eepth:uqp}
Suppose that $(\eta^{in}_{-},\omega^{in}_{-}) \in H^1_x H^2_y$ and that $\psi^{in}_{-} \in H^{-\frac{1}{2}}_x L^2_y$ with $\xkh{\eta^{in}_{-,a},\psi^{in}_{-,a},\omega^{in}_{-,a}}=(0,0,0)$. Let $(\eta_{-},u_{-},\phi)$ be a smooth solution for the system \eqref{eepfl:ndu}-\eqref{eepfl:phi}. Then the following estimates hold
\begin{flalign}
  \norm{\bb{P}[u_{-}]^x(t)}_{L^2}+\norm{\phi(t)}_{L^2} &\lesssim \frac{\sqrt{m_-}}{\lan{t}^\frac{1}{2}}\xkh{\norm{\sqrt{\frac{1}{m_-}}
  \eta^{in}_{-}}_{H^{-\frac{1}{2}}_x L^2_y}+\norm{\eta^{in}_{-}+\omega^{in}_{-}}_{H^{-\frac{1}{2}}_x H^{\frac{1}{2}}_y}}\nonumber\\
  &\quad+\frac{1}{\lan{t}}\norm{\eta^{in}_{-}+\omega^{in}_{-}}_{H^{-1}_x H^{1}_y}
  +\frac{\sqrt{m_-}}{\lan{t}^\frac{1}{2}}\norm{\psi^{in}_{-}}_{H^{-\frac{1}{2}}_x H^{-1}_y},\label{eepfl:pux}
\end{flalign}
\begin{flalign}
  \norm{\bb{P}[u_{-}]^y(t)}_{L^2} &\lesssim \frac{\sqrt{m_-}}{\lan{t}^\frac{3}{2}}\xkh{\norm{\sqrt{\frac{1}{m_-}}
  \eta^{in}_{-}}_{H^{-\frac{1}{2}}_x H^1_y}+\norm{\eta^{in}_{-}+\omega^{in}_{-}}_{H^{-\frac{1}{2}}_x H^{\frac{3}{2}}_y}}\nonumber\\
  &\quad+\frac{1}{\lan{t}^2}\norm{\eta^{in}_{-}+\omega^{in}_{-}}_{H^{-1}_x H^{2}_y}
  +\frac{\sqrt{m_-}}{\lan{t}^\frac{3}{2}}\norm{\psi^{in}_{-}}_{H^{-\frac{1}{2}}_x L^2_y},\label{eepfl:puy}
\end{flalign}and
\begin{flalign}
  &\norm{\bb{Q}[u_{-}](t)}_{L^2}+\sqrt{\frac{1}{m_-}}\norm{\eta_{-}(t)}_{L^2}\nonumber\\
  &\qquad\lesssim \lan{t}^\frac{1}{2}\xkh{\norm{\sqrt{\frac{1}{m_-}}\eta^{in}_{-}}_{L^2}
  +\norm{\psi^{in}_{-}}_{H^{-1}}+\norm{\eta^{in}_{-}+\omega^{in}_{-}}_{H^1}}.\label{eepfl:qut}
\end{flalign}
\end{theorem}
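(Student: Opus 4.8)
The plan is to conjugate away the Couette transport by passing to the frame moving with the flow, reduce the linearized system to a single forced Schr\"odinger-type ODE with a coercive, slowly varying potential, run an adiabatic (WKB) energy estimate for it, and recover the five quantities through their explicit Helmholtz and Poisson multipliers. Take the Fourier transform in $x$ (dual variable $k\in\bb{Z}\setminus\{0\}$, since the $x$-averages of the data vanish) and in $y$ (dual variable $\xi\in\bb{R}$), and let $\ha\eta(t,k,\xi),\ha\psi,\ha\omega$ be the Fourier coefficients of $\eta_-(t,x+ty,y)$, etc.\ --- the \emph{profiles} in the moving frame; by Plancherel all the $L^2_{x,y}$-norms in the statement equal the $\ell^2_k L^2_\xi$-norms of the corresponding profiles. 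Then the transport term $y\p_x$ disappears, $-\la$ acts as multiplication by $p(t):=k^2+(\xi-kt)^2$, and $\la^{-1},\p_x\la^{-1},\p_y\la^{-1}$ become $-1/p,\ ik/p,\ i(\xi-kt)/p$, so that \eqref{eepfl:eta}--\eqref{eepfl:ome} become, for each fixed $(k,\xi)$, the linear ODE $\p_t\ha\eta=-\ha\psi$, $\p_t\ha\omega=\ha\psi$, $\p_t\ha\psi=\tfrac{p+4\pi e^2}{m_-}\ha\eta+\tfrac{\p_t p}{p}\ha\psi-\tfrac{2k^2}{p}\ha\omega$, with $\p_t p=-2k(\xi-kt)$, $\p_t^2 p=2k^2$. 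Since $\p_t(\ha\eta+\ha\omega)=0$ (the Fourier form of \eqref{eepeq:ain}), set $C_0(k,\xi):=\ha{\eta^{in}_-}+\ha{\omega^{in}_-}$ and substitute $\ha\omega=C_0-\ha\eta$, $\ha\psi=-\p_t\ha\eta$, which closes a single inhomogeneous equation
\begin{equation*}
\p_{tt}\ha\eta-\frac{\p_t p}{p}\,\p_t\ha\eta+\xkh{\frac{p+4\pi e^2}{m_-}+\frac{2k^2}{p}}\ha\eta=\frac{2k^2}{p}\,C_0,\qquad \ha\eta(0)=\ha{\eta^{in}_-},\quad \p_t\ha\eta(0)=-\ha{\psi^{in}_-}.
\end{equation*}

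The Liouville substitution $\ha\eta=\sqrt{p}\,w$ removes the first-order term; using $\p_t^2 p=2k^2$ and $(\p_t p)^2=4k^2(p-k^2)$ the zeroth-order coefficient collapses to a clean potential,
\begin{equation*}
w''+V(t)\,w=\frac{2k^2}{p^{3/2}}\,C_0=:F(t),\qquad V(t):=\frac{p+4\pi e^2}{m_-}+\frac{3k^4}{p^2}\ \ge\ \frac{k^2+4\pi e^2}{m_-}\ \ge\ \frac{1+4\pi e^2}{m_-}>0 ,
\end{equation*}
so the electron density solves (in the moving frame) a forced harmonic oscillator whose frequency $\sqrt V$ is uniformly coercive, grows like $|k|\,t/\sqrt{m_-}$, and is slowly varying away from the critical time $t_c=\xi/k$ (where $-\la^{-1}$ is largest). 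For $w''+Vw=F$ with $V>0$, the energy $\ca E:=\abs{w'}^2+V\abs w^2$ obeys $\p_t\ca E=(\p_t V)\abs w^2+2\Re(\bar Fw')$, hence $\p_t(\ca E/\sqrt V)=\tfrac{\p_t V}{2V^{3/2}}(V\abs w^2-\abs{w'}^2)+\tfrac{2}{\sqrt V}\Re(\bar Fw')$. The first term is pointwise only $O\xkh{\tfrac{\abs{\p_t V}}{V}\cdot\tfrac{\ca E}{\sqrt V}}$, and $\int_0^t\tfrac{\abs{\p_t V}}{V}\,ds\sim\log\lan{t}$ diverges, so a bare Gr\"onwall overshoots by a power of $\lan{t}$; one removes this either by a normal-form correction --- replacing $\ca E/\sqrt V$ by $\ca E/\sqrt V+O\bigl(\tfrac{\abs{\p_t V}}{V^{3/2}}\abs w\abs{w'}\bigr)$, whose residual is governed by the absolutely integrable $\tfrac{\abs{\p_t^2 V}}{V^{3/2}}+\tfrac{(\p_t V)^2}{V^{5/2}}$ plus the forcing --- or by splitting $[0,\infty)=[0,2t_c]\cup[2t_c,\infty)$, on each of which $p$ (hence $V$, up to a bounded summable-in-$k$ error) is monotone, and invoking a Liouville--Green comparison. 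Either way one obtains, for every $(k,\xi)$,
\begin{equation*}
\abs{w(t)}^2+\frac{\abs{w'(t)}^2}{V(t)}\ \lesssim\ \frac{V(0)\abs{w(0)}^2+\abs{w'(0)}^2}{\sqrt{V(t)V(0)}}+\frac{1}{\sqrt{V(t)}}\xkh{\int_0^t\frac{\abs F}{V^{1/4}}\,ds}^2 ,
\end{equation*}
equivalently pointwise control of $\abs{\ha\eta(t,k,\xi)}=\sqrt p\,\abs w$ and $\abs{\ha\psi(t,k,\xi)}=\abs{\p_t\ha\eta}$ by $\abs{\ha{\eta^{in}_-}},\abs{\ha{\psi^{in}_-}},\abs{\ha{\omega^{in}_-}}$ with explicit powers of $p(0)=k^2+\xi^2$ and $p(t)$; in particular $\abs{\ha\eta(t)}$ and $\abs{\ha\psi(t)}/\sqrt{p(t)}$ grow like $\lan{t}^{1/2}$, which is the growth of $\eta_-$ and $\bb Q[u_-]$.

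It remains to translate back. In the profile variables $\bb Q[u_-]=\n\la^{-1}\psi_-$ corresponds to $\bigl(\tfrac{ik}{p}\p_t\ha\eta,\,\tfrac{i(\xi-kt)}{p}\p_t\ha\eta\bigr)$, $\eta_-$ to $\ha\eta$, $\bb P[u_-]^x=-\p_y\la^{-1}\omega_-$ to $\tfrac{i(\xi-kt)}{p}(C_0-\ha\eta)$, $\bb P[u_-]^y=\p_x\la^{-1}\omega_-$ to $-\tfrac{ik}{p}(C_0-\ha\eta)$, and $\phi=4\pi e\la^{-1}\eta_-$ to $-\tfrac{4\pi e}{p}\ha\eta$; inserting the bounds above, using the elementary decays $\tfrac{1}{p(t)}\lesssim\tfrac{1}{k^2\lan{t}^2}$ and $\tfrac{\abs{\xi-kt}}{p(t)}\lesssim\tfrac{1}{\abs k\lan{t}}$ on the bulk range $\abs\xi\lesssim\abs k\,t$ (the complementary range being immediate), and then summing in $k$ and integrating in $\xi$ against the weights that produce exactly the norms on the right-hand sides of \eqref{eepfl:pux}--\eqref{eepfl:qut}, yields the three estimates with their $\sqrt{m_-}$ factors. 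Theorems~\ref{th:uqp}--\ref{th:que} (the ion case) follow by the identical scheme, with $V$ now built from the nonlocal symbol appearing in \eqref{fl:psi}.

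\textit{Main obstacle.} The one nonroutine ingredient is the ODE estimate. Because $V$ is \emph{not} monotone --- it decreases on $[0,t_c]$ to its minimum at the critical time and increases thereafter --- there is no monotone Lyapunov functional, and the obvious energy estimate overestimates by a power of $\lan{t}$; extracting the sharp $\lan{t}^{1/2}$ growth and the $\lan{t}^{-1/2},\lan{t}^{-1},\lan{t}^{-3/2},\lan{t}^{-2}$ decay rates forces one to genuinely exploit the oscillation of $w$ (the WKB/adiabatic-invariant correction, or the two-regime Liouville--Green split), while simultaneously tracking the precise $(k,\xi)$- and $m_-$-dependence of every constant so that the advertised Sobolev norms emerge with the correct weights.
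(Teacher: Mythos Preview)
Your strategy is correct and is, at bottom, the same adiabatic-energy argument the paper runs, only packaged differently: you reduce to the scalar equation $w''+Vw=F$ via the Liouville substitution $\ha\eta=\sqrt p\,w$, whereas the paper keeps a first-order $2\times2$ system in the variables $B=(m_-^{-1/2}\alpha^{-1/4}\ha\Pi_-,\,\alpha^{-3/4}\ha\Psi_-)$. The ``normal-form correction'' you propose is exactly the cross term $2(h_2/\g_-)\Re(B_1\bar B_2)$ in the paper's functional $\ca E_-$, so the two formulations are equivalent. Where the paper's choice pays off is precisely on the point you flag as the obstacle: to make the Gr\"onwall constant uniform in $(k,\xi)$, the paper observes that the derivative of the corrected energy is controlled by $|\p_t(h_2/\g_-)|+|\p_t\log\l_-|$ and bounds $\int_0^\infty$ of each explicitly --- the first because the numerator of $\p_t(h_2/\g_-)$ is a polynomial of degree $6$ in $t$ (hence at most six sign changes) together with the uniform pointwise bound $|h_2/\g_-|\le\sqrt2/4$, and the second because $\l_-^2$ is monotone on each side of $t_c=\xi/k$ with $1\le\l_-^2\le1+4\pi e^2+2m_-$. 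This root-counting device replaces your Liouville--Green two-regime split and delivers the constants with no case analysis. Once $|B(t)|\asymp|B^{in}|$ is in hand (Lemma~\ref{eeple:aet}), Duhamel absorbs the forcing since $\int_0^\infty 2k^2\alpha^{-7/4}\,ds$ is bounded uniformly, and your final multiplier-plus-$\lan t\lesssim\lan{\xi/k-t}\lan{\xi/k}$ translation is exactly how the paper finishes.
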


\begin{theorem}\label{eeth:que}
Assume that $\xkh{\eta^{in}_{-},\omega^{in}_{-}} \in L^2_xH^{-\frac{1}{2}}_y$ and that $\psi^{in}_{-} \in H^{-\frac{3}{2}}_xH^{-2}_y$ with $\xkh{\eta^{in}_{-,a},\psi^{in}_{-,a},\omega^{in}_{-,a}}=(0,0,0)$. Let $(\eta_{-},u_{-},\phi)$ be a smooth solution for the system \eqref{eepfl:ndu}-\eqref{eepfl:phi}. Then we have
\begin{equation*}
  \norm{\bb{Q}[u_{-}](t)}_{L^2}+\sqrt{\frac{1}{m_-}}\norm{\eta_{-}(t)}_{L^2}
  \gtrsim \ca{C}^{in}_{\sigma}\xkh{\eta^{in}_{-},\psi^{in}_{-},\omega^{in}_{-}}\lan{t}^{\frac{1}{2}},
\end{equation*}where the constant $\ca{C}^{in}_{\sigma}\xkh{\eta^{in}_{-},\psi^{in}_{-},\omega^{in}_{-}}$ depends on Sobolev norms of the initial data.
\end{theorem}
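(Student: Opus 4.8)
The plan is to follow the same route as for the ion case, Theorem~\ref{th:que}; the only structural change is that the bounded nonlocal multiplier $\la\xkh{-\la+\tfrac{4\pi e^2}{T_-}}^{-1}$ there is replaced by the constant Klein--Gordon mass $\tfrac{4\pi e^2}{m_-}$, which only simplifies the analysis. First I would take the $(x,y)$-Fourier transform of \eqref{eepfl:eta}--\eqref{eepfl:ome} with dual variables $(k,\ze)\in(\bb Z\setminus\{0\})\times\bb R$ — the $k=0$ modes drop out since $(\eta^{in}_{-,a},\psi^{in}_{-,a},\omega^{in}_{-,a})=(0,0,0)$ forces them to vanish for all time — and pass to the profile unknowns $A(t,k,\ze)=\ha{\eta_-}(t,k,\ze-kt)$, $B=\ha{\psi_-}(t,k,\ze-kt)$, $C=\ha{\omega_-}(t,k,\ze-kt)$, which turns $\p_t+y\p_x$ into $\p_t$ and the symbol of $-\la$ into multiplication by $\rho(t):=k^2+(\ze-kt)^2$. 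Adding \eqref{eepfl:eta} and \eqref{eepfl:ome} shows $\eta_-+\omega_-$ is transported, so $A+C\equiv\Theta_0:=\ha{\eta^{in}_-}+\ha{\omega^{in}_-}$ is $t$-independent; eliminating $C=\Theta_0-A$ and $B=-\dot A$ reduces the system to the scalar equation $\ddot A-\tfrac{\dot\rho}{\rho}\dot A+\xkh{\tfrac{2k^2}{\rho}+\tfrac{\rho+4\pi e^2}{m_-}}A=\tfrac{2k^2}{\rho}\Theta_0$.

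Next I would apply the Liouville substitution $A=\sqrt\rho\,W$, which kills the first-order term and gives $\ddot W+\ha V\,W=2k^2\Theta_0\rho^{-3/2}$ with $\ha V(t,k,\ze)=\tfrac{3k^4}{\rho^2}+\tfrac{\rho+4\pi e^2}{m_-}$. The decisive structural point — the exact analogue of the positivity step in Theorem~\ref{th:que}, where for the ion system one must compute it — is that $\ha V>0$ for all $t$ and all $(k,\ze)$, here trivially since $\ha V$ is a sum of positive terms; hence the $W$-equation has no turning point. Moreover $\ha V\sim\rho/m_-\sim k^2t^2/m_-$ grows while $\abs{\dot{\ha V}}\ha V^{-3/2}\to0$, so one is in the slowly-varying (WKB) regime, and a Liouville--Green expansion — equivalently, peeling the fast-decaying $O(\rho^{-2})$ part off $\ha V$, comparing with Weber's equation, and absorbing the source by Duhamel (using $\int_0^\infty\rho^{-3/2}\,\ha V^{-1/2}\,dt<\infty$) — yields, as $t\to\infty$,
\begin{equation*}
  W(t,k,\ze)=\ha V^{-1/4}\xkh{c_+(k,\ze)\,e^{i\Phi(t,k,\ze)}+c_-(k,\ze)\,e^{-i\Phi(t,k,\ze)}}+o\big(\ha V^{-1/4}\big),\qquad \Phi(t,k,\ze):=\int_0^t\sqrt{\ha V(s,k,\ze)}\,ds,
\end{equation*}
where $c_\pm$ are bounded linear functionals of $(\ha{\eta^{in}_-},\ha{\psi^{in}_-},\ha{\omega^{in}_-})(k,\ze)$.

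Now the combination in the statement is the natural one: since $\ha{\eta_-}=A=\sqrt\rho\,W$ while $\widehat{\bb Q[u_-]}$ has modulus squared $\rho^{-1}\abs{B}^2=\rho^{-1}\abs{\dot A}^2$, a short computation gives $\tfrac1{m_-}\abs{\ha{\eta_-}(t,k,\ze)}^2+\abs{\widehat{\bb Q[u_-]}(t,k,\ze)}^2=\ha V\abs{W}^2+\abs{\dot W}^2+O(1/t)$, and feeding in the WKB form (with $\dot W\sim i\ha V^{1/4}(c_+e^{i\Phi}-c_-e^{-i\Phi})$) the oscillatory cross-terms $e^{\pm2i\Phi}$ cancel between $\ha V\abs{W}^2$ and $\abs{\dot W}^2$, leaving $2\sqrt{\ha V}\,(\abs{c_+}^2+\abs{c_-}^2)+o(t)$; since $\sqrt{\ha V}\sim\abs{k}t/\sqrt{m_-}$ this is exactly the pointwise-in-frequency $t$-growth. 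Integrating in $\ze$ and summing in $k$ then gives
\begin{equation*}
  \norm{\bb Q[u_-](t)}_{L^2}^2+\tfrac1{m_-}\norm{\eta_-(t)}_{L^2}^2=\tfrac{2t}{\sqrt{m_-}}\sum_{k\neq0}\abs{k}\int_{\bb R}\big(\abs{c_+(k,\ze)}^2+\abs{c_-(k,\ze)}^2\big)\,d\ze+o(t),
\end{equation*}
and defining $\ca C^{in}_\sigma$ to be a fixed multiple of the square root of this frequency integral — manifestly bounded by Sobolev norms of $(\eta^{in}_-,\psi^{in}_-,\omega^{in}_-)$ through the $\abs k$-weight and the boundedness of $c_\pm$ — yields the claimed lower bound for all large $t$, hence for all $t\ge0$ after enlarging the implicit constant (consistency with the $\lan{t}^{1/2}$ upper bound of Theorem~\ref{eepth:uqp} shows the exponent is sharp). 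That this quantity may degenerate only at isolated times and only for a non-generic, empty-interior set of data is argued exactly as in \cite{pm2021}.

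The step I expect to be the main obstacle is making the WKB expansion uniform in the frequency parameters $(k,\ze)$: the adiabatic approximation is worst near the critical time $t_c=\ze/k$, where $\rho$ attains its minimum $k^2$ and $\sqrt{\ha V}$ is smallest, and also in the regime of small $\abs k$ with large $\abs\ze$. One must verify that the Liouville--Green error-control integral $\int\abs{\ha V^{-1/4}(\ha V^{-1/4})''}\,d\tau$ in the frequency variable $\tau=\ze-kt$ stays bounded uniformly over all such $(k,\ze)$, match the WKB form continuously through $t_c$ (where $\sqrt{\ha V}$ dips but stays $\ge\sqrt{(k^2+4\pi e^2)/m_-}>0$), and control the $O(1/t)$ error in the identity above uniformly enough in $\ze$ that it survives the integration. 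A secondary, softer point is the non-degeneracy of the map $(\eta^{in}_-,\psi^{in}_-,\omega^{in}_-)\mapsto(c_+,c_-)$, which is what makes $\ca C^{in}_\sigma$ a genuine, not identically vanishing, functional of the data.
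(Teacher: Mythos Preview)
Your route is workable in outline but genuinely different from the paper's, and more laborious. The paper does not pass to a scalar second–order equation or invoke any WKB/Liouville--Green expansion. Instead it keeps the first–order $2\times2$ system for $B(t)=\big(m_-^{-1/2}\alpha^{-1/4}\ha{\Pi_-},\,\alpha^{-3/4}\ha{\Psi_-}\big)$ and builds an explicit quadratic energy $\ca E_-(t)=\l_-|B_1|^2+2(h_2/\g_-)\Re(B_1\bar B_2)+\l_-^{-1}|B_2|^2$ with $\l_-=(1+4\pi e^2/\alpha+2m_-k^2/\alpha^2)^{1/2}$ and $h_2/\g_-$ uniformly bounded by $\sqrt2/4$. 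The point is that $d\ca E_-/dt$ is controlled by $(|d(h_2/\g_-)/dt|+|d\log\l_-/dt|)\ca E_-$, and both coefficients have \emph{finite total variation on $[0,\infty)$, bounded by explicit constants independent of $(k,\ze)$} (the first because $h_2/\g_-$ is a rational function whose numerator has at most six sign changes, the second because $\l_-^2$ is monotone on either side of $t=\ze/k$). Gr\"onwall then gives $C_2|B^{in}|\le|B(t)|\le C_2^\sigma|B^{in}|$ for \emph{all} $t\ge0$, uniformly in frequency; the lower bound of the theorem follows in two lines from $\alpha^{1/2}\ge\lan{\ze-kt}\gtrsim\lan{kt}/\lan{\ze}$, with the time-dependent ``constant'' $\ca C^{in}_\sigma=\|\ca R(t,B^{in},F^{in}_-)\|_{L^2_xH^{-1/2}_y}$ exactly as in the ion case.

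What each approach buys: the paper's energy/Gr\"onwall argument is elementary and automatically uniform in $(k,\ze)$ and in $t$, so the obstacle you correctly flag (uniformity of the WKB error near $t_c=\ze/k$ and for large $|\ze|/|k|$) simply never arises. Your approach, if the uniform error control is carried out, would give sharper information --- genuine $t$-independent asymptotic amplitudes $c_\pm(k,\ze)$ rather than the paper's $t$-dependent $\ca R(t,\cdot)$ --- but note that the step ``hence for all $t\ge0$ after enlarging the implicit constant'' is not free: your $\ca C^{in}_\sigma$ is defined through the large-$t$ amplitudes and can exceed $\|\bb Q[u_-](0)\|_{L^2}+m_-^{-1/2}\|\eta_-(0)\|_{L^2}$ (take $\eta^{in}_-=\psi^{in}_-=0$, $\omega^{in}_-\neq0$), so small times require a separate argument --- essentially the paper's two-sided bound on $|B(t)|$, at which point you may as well use that bound throughout.
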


\begin{remark}
It should be noted that at the linearized level, the dynamics of the ion Euler-Poisson system and the electron Euler-Poisson system are depicted by different linear PDE systems, \eqref{eq:ptt} for the ion dynamics and the \eqref{eepeq:ptt} for the electron dynamics. The linear dispersion relation behaves much closer to the wave dispersion for the ions and to the Klein-Gordon dispersion for the electrons. We investigate the linear stability the Couette flow for ion and electron dynamics separately below.
\end{remark}

\begin{remark}
Since the analysis in frequency space can be given by Lemma \ref{le:aet} or \ref{eeple:aet}, estimates \eqref{fl:qut} and \eqref{eepfl:qut} are also valid in the general Sobolev space $H^{-s}(s>\frac{1}{2})$.
\end{remark}

This paper is organized as follows. In the next section, we will give some notations and the definitions of Sobolev spaces. We will also write out the coordinate transform explicitly. The linear stability of the ion dynamics near the Couette flow will be analyzed in Section \ref{se:3} and the linear stability of the electron dynamics near the Couette flow will be analyzed in Section \ref{se:4}.

\section{Notations}\label{se:2}
The Fourier transform for a function $f(x,y)$ is denoted by
\begin{equation*}
  \ha{f}(k,\ze)=\frac{1}{2\pi}\iint_{\bb{T}\times\bb{R}}e^{-i\xkh{kx+\ze y}}f(x,y)dxdy.
\end{equation*}Then
\begin{equation*}
  f(x,y)=\frac{1}{2\pi}\sum_{k\in\bb{Z}}\int_{\bb{R}}e^{i\xkh{kx+\ze y}}\ha{f}(k,\ze)d\ze.
\end{equation*}The anisotropic Sobolev space is defined by
\begin{equation*}
  H^r_xH^s_y({\bb{T}\times\bb{R}})=\dkh{f:\norm{f}^2_{H^r_xH^s_y({\bb{T}\times\bb{R}})}
  =\sum_k\int\lan{k}^{2r}\lan{\ze}^{2s}\abs{\ha{f}}^2(k,\ze)d\ze<+\infty},
\end{equation*}where $\lan{a}=(1+a^2)^{1/2}$ for $a\in\bb{R}$. In the same way, the usual Sobolev space is defined as
\begin{equation*}
  H^s({\bb{T}\times\bb{R}})=\dkh{f:\norm{f}^2_{H^s({\bb{T}\times\bb{R}})}
  =\sum_k\int\lan{k,\ze}^{2s}\abs{\ha{f}}^2(k,\ze)d\ze<+\infty}.
\end{equation*}Here $\lan{a,b}=(1+a^2+b^2)^{1/2}$ for $a,b\in\bb{R}$.

In order to deal with the transport term in system (\ref{fl:eta})-(\ref{fl:ome}) or (\ref{eepfl:eta})-(\ref{eepfl:ome}), we introduce a set of coordinate transformations
\begin{equation}\label{eq:xyt}
  X=x-yt,~Y=y.
\end{equation}So the differential operators can be represented as
\begin{equation}\label{eq:ngn}
  \p_x=\p_X,~\p_y=\p_Y-t\p_X,~\la=:\la_L=\p_{XX}+\xkh{\p_Y-t\p_X}^2.
\end{equation}
We denote the symbol for $-\la_L$ by
\begin{equation}\label{eq:kt2}
  \alpha(t,k,\ze)=k^2+(\ze-kt)^2.
\end{equation}Then the symbol of the operator $2\p_X(\p_Y-t\p_X)$ is
\begin{equation}\label{eq:pta}
  \p_t\alpha(t,k,\ze)=-2k(\ze-kt).
\end{equation}

\section{Analysis for the ion dynamics}\label{se:3}
In this section, we consider the dynamics of the system \eqref{fl:eta}-\eqref{fl:ome}. Using the coordinate transformations \eqref{eq:xyt}, we define the functions
\begin{flalign}
  &\Pi_{+}(t,X,Y)=\eta_{+}(t,X+tY,Y),\label{fl:pit}\\
  &\Psi_{+}(t,X,Y)=\psi_{+}(t,X+tY,Y),\label{fl:sit}\\
  &\Gamma_{+}(t,X,Y)=\omega_{+}(t,X+tY,Y).\label{fl:gam}
\end{flalign}
Set $F_{+}(t,X,Y):=\Pi_{+}(t,X,Y)+\Gamma_{+}(t,X,Y)$. Combining (\ref{fl:eta}) with (\ref{fl:ome}), we have $\p_t F_{+}=0$, which yields
\begin{equation*}
  \Gamma_{+}(t,X,Y)=F^{in}_{+}(X,Y)-\Pi_{+}(t,X,Y).
\end{equation*}Here $F^{in}_{+}=\eta^{in}_{+}+\omega^{in}_{+}$. According to (\ref{eq:pqu}), we obtain
\begin{flalign}
  U^y_{+}&=\xkh{\p_Y-t\p_X}\la^{-1}_L\Psi_{+}+\p_X\la^{-1}_L\Gamma_{+}\nonumber\\
  &=\xkh{\p_Y-t\p_X}\la^{-1}_L\Psi_{+}+\p_X\la^{-1}_L F^{in}_{+}-\p_X\la^{-1}_L \Pi_{+} \nonumber.
\end{flalign}In consequence, we can rewrite system (\ref{fl:eta})-(\ref{fl:ome}) as
\begin{flalign}
  &\p_t \Pi_{+}+\Psi_{+}=0,\label{fl:ptp}\\
  &\p_t \Psi_{+}+2\p_{XX}\la^{-1}_L F^{in}_{+}+2\p_X\xkh{\p_Y-t\p_X}\la^{-1}_L\Psi_{+}\nonumber\\
  &-\xkh{2\p_{XX}\la^{-1}_L-\frac{T_+}{m_+}\la_L}\Pi_{+}
  +\frac{4\pi e^2}{m_+}\la_L\xkh{-\la_L+\frac{4\pi e^2}{T_-}}^{-1}\Pi_{+}=0.\label{fl:2me}
\end{flalign}

Taking the Fourier transform on system (\ref{fl:ptp})-(\ref{fl:2me}), we get the following system
\begin{flalign}
  &\p_t \ha{\Pi_{+}}+\ha{\Psi_{+}}=0,\label{fl:aps}\\
  &\p_t \ha{\Psi_{+}}+\frac{2k^2}{\alpha}\ha{F^{in}_{+}}-\frac{\p_t \alpha}{\alpha}\ha{\Psi_{+}}
  -\xkh{\frac{2k^2}{\alpha}+\frac{T_+\alpha}{m_+}
  +\frac{4 \pi e^2}{m_+}\frac{\alpha}{\alpha+\frac{4\pi e^2}{T_-}}}\ha{\Pi_{+}}=0.\label{fl:haf}
\end{flalign}In order to study system (\ref{fl:aps})-(\ref{fl:haf}), we need to find a suitable symmetrization for this system. Define
\begin{equation}\label{eq:atx}
  A(t)=\xkh{A_1(t),A_2(t)}^{\top}
  =\xkh{\sqrt{\frac{T_+}{m_+}}\frac{\ha{\Pi_{+}}(t)}{\alpha^{1/4}},\frac{\ha{\Psi_{+}}(t)}{\alpha^{3/4}}}^{\top}.
\end{equation}By a delicate calculation, $A(t)$ satisfy a non-autonomous 2D dynamical system
\begin{equation}\label{eq:ddt}
\begin{cases}
  \frac{d}{dt}A(t)=L_{+}(t)A(t)+M_{+}(t)\ha{F^{in}_{+}},\\
  A(0)=A^{in},
\end{cases}
\end{equation}where
\begin{flalign}
  &L_{+}(t)=\begin{bmatrix}-\frac{1}{4}\alpha^{-1}\p_t\alpha & -\sqrt{\frac{T_+}{m_+}}\alpha^{1/2}\\
  \frac{4\pi e^2}{\sqrt{m_+T_+}}\frac{\alpha^{1/2}}{\alpha+\frac{4 \pi e^2}{T_-}}+\sqrt{\frac{m_+}{T_+}}\frac{2k^2}{\alpha^{3/2}}
  +\sqrt{\frac{T_+}{m_+}}\alpha^{1/2} & \frac{1}{4}\alpha^{-1}\p_t\alpha\end{bmatrix},\\
  &M_{+}(t)=\xkh{0,-\frac{2k^2}{\alpha^{7/4}}}^{\top},~A^{in}=\xkh{\sqrt{\frac{T_+}{m_+}}\frac{\ha{\Pi^{in}_{+}}}{\xkh{k^2+\ze^2}^{1/4}},
  \frac{\ha{\Psi^{in}_{+}}}{\xkh{k^2+\ze^2}^{3/4}}}^{\top}.
\end{flalign}

By virtue of Duhamel's formula, the solution $A(t)$ to system (\ref{eq:ddt}) is given by
\begin{equation}\label{eq:srt}
  A(t)=S_L(t,0)A^{in}+\int^t_0 S_L(t,s)M_{+}(s)\ha{F^{in}_{+}}ds.
\end{equation}
Here, $S_L$ denotes the solution operator, satisfying the group property
\begin{equation*}
  S_L(t,0)S_L(0,s)=S_L(t,s)
\end{equation*}for any $t,s>0$. In consequence, it suffices to study the homogeneous problem of system (\ref{eq:ddt}). For this purpose, we denote by $A(t)$ a solution to system (\ref{eq:ddt}) with $\ha{F^{in}_{+}}=0$. Let
\begin{equation}
  \ca{E}_{+}(t)=\xkh{\sqrt{\frac{p_1}{m_1}}|A_1|^2}(t)
  +2\xkh{\frac{h_1}{\sqrt{m_1p_1}}\Re\xkh{A_1 \bar{A_2}}}(t)+\xkh{\sqrt{\frac{m_1}{p_1}}|A_2|^2}(t),
\end{equation}where
\begin{flalign}
  &h_1(t)=\frac{1}{4}\alpha^{-1}\p_t\alpha,~m_1(t)=\sqrt{\frac{T_+}{m_+}}\alpha^{1/2},\label{fl:hmt}\\
  &p_1(t)=\frac{4\pi e^2}{\sqrt{m_+T_+}}\frac{\alpha^{1/2}}{\alpha+\frac{4 \pi e^2}{T_-}}
  +\sqrt{\frac{m_+}{T_+}}\frac{2k^2}{\alpha^{3/2}}+\sqrt{\frac{T_+}{m_+}}\alpha^{1/2}.\label{fl:pmt}
\end{flalign}
Then we can get the upper and lower bounds for $\ca{E}_+(t)$ and the solution $A(t)$.
\begin{lemma}\label{le:aet}
There exists positive constants $C_1$, $C^\delta_1$, $C_2$, $C^\delta_2$  that do not depend on $k$ and $\ze$ such that
\begin{equation}\label{eq:c1e}
  C_1\ca{E}_{+}(0) \leq \ca{E}_{+}(t) \leq C^\delta_1\ca{E}_{+}(0),
\end{equation}and
\begin{equation}\label{eq:c2a}
  C_2\abs{A^{in}} \leq \abs{A(t)} \leq C^\delta_2\abs{A^{in}}.
\end{equation}
\end{lemma}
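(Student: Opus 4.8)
The plan is to analyze the homogeneous system $\frac{d}{dt}A(t) = L_+(t)A(t)$ via the energy functional $\ca{E}_+(t)$, exploiting the fact that $\ca{E}_+$ is designed so that the skew-symmetric-like part of $L_+$ (the off-diagonal $\pm m_1$ entries and the diagonal $\pm h_1$ entries) contributes nothing to $\frac{d}{dt}\ca{E}_+$, leaving only a controllable error from the time-dependence of the coefficients $h_1, m_1, p_1$. First I would verify that $\ca{E}_+(t)$ is equivalent to $|A(t)|^2$ uniformly in $k,\ze$: this amounts to checking that the quadratic form with matrix $\begin{bmatrix} \sqrt{p_1/m_1} & h_1/\sqrt{m_1 p_1} \\ h_1/\sqrt{m_1 p_1} & \sqrt{m_1/p_1}\end{bmatrix}$ has eigenvalues bounded above and below, i.e. that $\det = 1 - h_1^2/(m_1 p_1)$ stays in a fixed compact subinterval of $(0,\infty)$ and the trace is bounded. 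Since $h_1 = \frac{1}{4}\alpha^{-1}\p_t\alpha = -\frac{k(\ze-kt)}{2\alpha}$, one has $|h_1| \le \frac{1}{2}\cdot\frac{|k||\ze-kt|}{k^2+(\ze-kt)^2}\le \frac14$, while $m_1 p_1 = \frac{T_+}{m_+}\alpha\big(\text{three nonnegative terms, one of which is }\frac{T_+}{m_+}\alpha\big) \ge \frac{T_+^2}{m_+^2}\alpha^2 \gtrsim h_1^2$ after noting $h_1^2 \le \frac{1}{4}$; so $h_1^2/(m_1p_1)$ is bounded away from $1$, giving the equivalence $\ca{E}_+(t)\sim |A(t)|^2$ with constants independent of $k,\ze$. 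This reduces \eqref{eq:c2a} to \eqref{eq:c1e}.

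Next I would compute $\frac{d}{dt}\ca{E}_+(t)$ along solutions. Writing $\ca{E}_+ = \lan{Q(t)A, A}$ with $Q$ the symmetric matrix above, one gets $\frac{d}{dt}\ca{E}_+ = \lan{(\dot Q + L_+^\top Q + Q L_+)A, A\rangle}$. The whole point of the symmetrization is that $L_+^\top Q + Q L_+ = -\dot Q$ up to a remainder built only from $\dot h_1/h_1$-type and $\dot m_1/m_1$, $\dot p_1/p_1$-type logarithmic derivatives — more precisely the construction forces the principal cancellation and leaves $\frac{d}{dt}\ca{E}_+ = R(t)\ca{E}_+$ (or $\le R(t)\ca{E}_+$) for a scalar $R(t)$ controlled by $|\dot m_1/m_1| + |\dot p_1/p_1| + |\dot h_1|$. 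Then \eqref{eq:c1e} follows by Grönwall once I show $\int_0^\infty |R(t)|\,dt \le C$ uniformly in $k,\ze$. This integrability is the crux: using $\alpha(t,k,\ze) = k^2 + (\ze-kt)^2$, the substitution $\tau = \ze/k - t$ (for $k\neq 0$; the $k=0$ mode is handled separately and is in fact static since $\p_t\alpha = 0$, $h_1 = 0$ there) turns $\int_0^\infty |\dot\alpha|/\alpha\,dt = \int |{-2k\tau}|/(k^2+k^2\tau^2)\,d\tau \lesssim \int |\tau|/(1+\tau^2)\,d\tau$, which diverges logarithmically pointwise but whose contribution to $\ca{E}_+$ cancels because $h_1$ itself carries that term; the surviving pieces like $\int |\dot h_1|\,dt$ and $\int |\dot m_1/m_1 - \dot p_1/p_1|\,dt$ decay like $\int d\tau/(1+\tau^2) < \infty$. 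Tracking exactly which combinations appear — and confirming that only the convergent ones survive — is the delicate bookkeeping step.

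The main obstacle I anticipate is precisely the cancellation structure in $\frac{d}{dt}\ca{E}_+$: one must choose the cross-term coefficient $h_1/\sqrt{m_1 p_1}$ so that the non-integrable $\dot\alpha/\alpha$ contributions from all three entries of $Q$ and from $L_+$ cancel, leaving an error that is $O(\text{integrable})$ uniformly in $(k,\ze)$ — including the regime $\ze/k \approx t$ where $\alpha$ is smallest ($\alpha = k^2$) and the coefficients vary most rapidly. A secondary technical point is the extra term $\frac{4\pi e^2}{\sqrt{m_+T_+}}\frac{\alpha^{1/2}}{\alpha + 4\pi e^2/T_-}$ in $p_1$ coming from the nonlocal Poisson coupling: one must check it is a bounded, slowly-varying perturbation (it is $\le \frac{4\pi e^2}{\sqrt{m_+ T_+}}\cdot\frac{1}{2}(4\pi e^2/T_-)^{-1/2}$ and its logarithmic derivative is dominated by $|\dot\alpha|/\alpha$ times a bounded factor), so it does not spoil either the uniform equivalence $\ca{E}_+\sim|A|^2$ or the integrability of $R$. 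Once the uniform-in-frequency constant $C$ from Grönwall is in hand, \eqref{eq:c1e} and then \eqref{eq:c2a} follow immediately, with $C_1 = e^{-C}$ (times the equivalence constants) and $C_1^\delta = e^{C}$; the superscript $\delta$ presumably records dependence on the plasma parameters $e, T_\pm, m_\pm$ entering the bound on $R$.
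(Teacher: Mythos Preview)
Your overall strategy matches the paper's: show $\mathcal{E}_+(t)\sim|A(t)|^2$ uniformly in $(k,\xi)$, then Gr\"onwall on a differential inequality $|\frac{d}{dt}\mathcal{E}_+|\le R(t)\,\mathcal{E}_+$ with $\int_0^\infty R\,dt$ bounded independently of frequency. Your coercivity argument via $\det=1-h_1^2/(m_1p_1)$ is the right one; the paper writes $\lambda_+=\sqrt{p_1/m_1}$, $\gamma_+=\sqrt{m_1p_1}$ and records the two bounds $|h_1|/\gamma_+\le\sqrt2/2$ and $1\le\lambda_+^2\le 1+4\pi e^2/T_++2m_+/T_+$, which give exactly the equivalence you describe.

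Where you are imprecise is in naming the Gr\"onwall coefficient. The exact identity the paper derives is
\[
\frac{d\mathcal{E}_+}{dt}=\lambda_+|A_1|^2\,\frac{d}{dt}\log\lambda_+
+2\Re(A_1\bar A_2)\,\frac{d}{dt}\!\Bigl(\frac{h_1}{\gamma_+}\Bigr)
-\frac{1}{\lambda_+}|A_2|^2\,\frac{d}{dt}\log\lambda_+,
\]
so the two quantities to integrate are $\bigl|\frac{d}{dt}\log\lambda_+\bigr|$ (your $|\dot m_1/m_1-\dot p_1/p_1|$, fine) and $\bigl|\frac{d}{dt}(h_1/\gamma_+)\bigr|$, not $|\dot h_1|$. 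The second hides precisely the trap you flagged: $\frac{d}{dt}(h_1/\gamma_+)=\dot h_1/\gamma_+-(h_1/\gamma_+)(\dot\gamma_+/\gamma_+)$, and $\dot\gamma_+/\gamma_+$ carries the non-integrable $\dot\alpha/(2\alpha)$ piece. Your direct-estimation route via $\tau=\xi/k-t$ can be pushed through, but only by using the extra decay $|h_1/\gamma_+|\lesssim(|k|\sqrt{1+\tau^2})^{-1}$ to absorb it---a step your sketch does not isolate. The paper sidesteps pointwise estimation entirely with a bounded-variation trick: it notes that $\frac{d}{dt}(h_1/\gamma_+)$ is rational with numerator $\mathcal{G}$ a polynomial in $t$ of degree $12$, hence at most $12$ sign changes, so the total variation of $h_1/\gamma_+$ on $[0,\infty)$ is at most $2\cdot14\cdot\sup|h_1/\gamma_+|\le14\sqrt2$. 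Likewise $\lambda_+^2$ has its unique critical point at $t=\xi/k$, giving $\int_0^\infty|\frac{d}{dt}\log\lambda_+|\,dt\le\log(1+4\pi e^2/T_++2m_+/T_+)$. This root-counting device produces explicit constants, swallows the nonlocal Poisson term in $p_1$ for free (it merely raises the degree of $\mathcal{G}$), and avoids any asymptotic analysis in $\tau$.
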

\begin{proof}
Define
\begin{gather}
  \l_{+}=\sqrt{\frac{p_1}{m_1}}=\xkh{1+\frac{2m_+k^2}{T_+\alpha^2}+\frac{4\pi e^2}{T_+}
  \frac{1}{\alpha+\frac{4\pi e^2}{T_-}}}^{1/2},\label{ga:lpm}\\
  \g_{+}=\sqrt{m_1p_1}=\xkh{\frac{4\pi e^2}{m_+}\frac{\alpha}{\alpha+\frac{4\pi e^2}{T_-}}
  +\frac{2k^2}{\alpha}+\frac{T_+\alpha}{m_+}}^{1/2}.\label{ga:gmp}
\end{gather}From the definition of $\l_{+}$ we get that
\begin{equation}\label{eq:lme}
  1 \leq (\l_{+})^2 \leq 1+\frac{4\pi e^2}{T_+}+\frac{2m_+}{T_+}.
\end{equation}Moreover, we have
\begin{flalign}
  \frac{|h_{1}|}{\g_{+}}&=\frac{|\p_t \alpha|}{4\alpha}\xkh{\frac{4\pi e^2}{m_+}\frac{\alpha}{\alpha
  +\frac{4\pi e^2}{T_-}}+\frac{2k^2}{\alpha}+\frac{T_+\alpha}{m_+}}^{-1/2}\nonumber\\
  &=\frac{|\p_t \alpha|}{4\alpha}\xkh{\frac{m_+}{T_+\alpha}}^{1/2}\xkh{1+\frac{2m_+k^2}{T_+\alpha^2}
  +\frac{4\pi e^2}{T_+}\frac{1}{\alpha+\frac{4\pi e^2}{T_-}}}^{-1/2}\nonumber\\
  &\leq\frac{|k|}{2\sqrt{\alpha}}\xkh{\frac{m_+}{T_+\alpha}}^{1/2}
  \xkh{\frac{T_+\alpha^2}{2m_+k^2}}^{1/2}\leq\frac{\sqrt{2}}{2},\label{fl:hg2}
\end{flalign}
where we have used the fact that $|\p_t \alpha|\leq2|k|\sqrt{\alpha}$.

Setting
\begin{equation*}
  \wi{\ca{E}}_{+}(t)=\xkh{\l_{+}|A_1|^2}(t)+\xkh{\frac{1}{\l_{+}}|A_2|^2}(t).
\end{equation*}Then from (\ref{fl:hg2}) it can be deduced that
\begin{equation}\label{eq:E22}
\xkh{1-\frac{\sqrt{2}}{2}}\wi{\ca{E}}_{+}(t)\leq\ca{E}_{+}(t)\leq \xkh{1+\frac{\sqrt{2}}{2}}\wi{\ca{E}}_{+}(t).
\end{equation}
Obviously, the coerciveness of $\ca{E}_{+}(t)$ is ensured by the above inequality and (\ref{eq:lme}).

According to (\ref{fl:hmt}), (\ref{fl:pmt}), (\ref{ga:lpm}) and (\ref{ga:gmp}), system (\ref{eq:ddt}) becomes
\begin{gather}
  \l_{+}\frac{d}{dt}A_1=-h_{1}\l_{+} A_1-\g_{+} A_2,\label{ga:la1}\\
  \frac{1}{\l_{+}}\frac{d}{dt}A_2=\g_{+} A_1+\frac{h_{1}}{\l_{+}}A_2.\label{ga:la2}
\end{gather}
Multiplying equations (\ref{ga:la1}) and (\ref{ga:la2}) by $\bar{A}_1$ and $\bar{A}_2$, respectively, and similarly for their conjugate, we get
\begin{equation}\label{eq:dl2}
  \frac{\l_{+}}{2}\frac{d}{dt}|A_1|^2+\frac{1}{2\l_{+}}\frac{d}{dt}|A_2|^2=-h_{1}\l_{+}|A_1|^2+\frac{h_{1}}{\l_{+}}|A_2|^2.
\end{equation}
Note the fact that
\begin{equation}\label{eq:hgd}
  \frac{h_{1}}{\g_{+}}\frac{d}{dt}\Re(A_1\bar{A}_2)=h_{1}\l_{+}|A_1|^2-\frac{h_{1}}{\l_{+}}|A_2|^2.
\end{equation}
Adding (\ref{eq:dl2}) to (\ref{eq:hgd}), we obtain
\begin{equation*}
  \l_{+}\frac{d}{dt}|A_1|^2+\frac{2h_{1}}{\g_{+}}\frac{d}{dt}\Re(A_1\bar A_2)+\frac{1}{\l_{+}}\frac{d}{dt}|A_2|^2=0,
\end{equation*}which yields
\begin{equation*}
  \frac{d\ca{E}_{+}}{dt}=\l_{+}|A_1|^2\frac{d}{dt}\xkh{\log\l_{+}}+2\Re(A_1\bar{A_2})\frac{d}{dt}\xkh{\frac{h_{1}}{\g_{+}}}
  -\frac{1}{\l_{+}}|A_2|^2\frac{d}{dt}\xkh{\log\l_{+}}.
\end{equation*}Using Young's inequality, from (\ref{eq:E22}) we conclude that
\begin{flalign}
  \frac{d\ca{E}_{+}}{dt}&\leq\xkh{\abs{\frac{d}{dt}\xkh{\frac{h_{1}}{\g_{+}}}}+\abs{\frac{d}{dt}\xkh{\log\l_{+}}}}\wi{\ca{E}}_{+}\nonumber\\
  &\leq \xkh{2+\sqrt{2}}\xkh{\abs{\frac{d}{dt}\xkh{\frac{h_{1}}{\g_{+}}}}+\abs{\frac{d}{dt}\xkh{\log\l_{+}}}}\ca{E}_{+}.\label{fl:dca}
\end{flalign}An argument similar to that of (\ref{fl:dca}) gives
\begin{equation}\label{fl:geq}
  \frac{d\ca{E}_{+}}{dt}\geq-\xkh{2+\sqrt{2}}\xkh{\abs{\frac{d}{dt}\xkh{\frac{h_{1}}{\g_{+}}}}+\abs{\frac{d}{dt}\xkh{\log\l_{+}}}}\ca{E}_{+}.
\end{equation}

For the purpose of applying Gr\"{o}nwall's inequality, we need to estimate the integrals
 $$\int^{+\infty}_0\abs{\frac{d}{dt}\xkh{\frac{h_{1}}{\g_{+}}}}d\vt \ \ \ and \ \ \ \int^{+\infty}_0\abs{\frac{d}{dt}\xkh{\log\l_{+}}}d\vt.$$
First of all, we deal with the first one. By a direct calculation, we have
\begin{equation*}
  \frac{d}{dt}\xkh{\frac{h_{1}}{\g_{+}}}=\frac{\ca G}{8m_+(T_-)^2(\g_{+})^3\alpha^3\xkh{\alpha+\frac{4 \pi e^2}{T_-}}^3},
\end{equation*}where
\begin{flalign*}
  \ca G=&16\pi e^2(T_-)^2k^2\alpha^5+\xkh{8m_+T_-k^4\alpha^3+4T_+T_-k^2\alpha^5}\xkh{T_-\alpha+4\pi e^2}\\
  &+128\pi^2e^4T_-k^2\alpha^4+\xkh{64\pi e^2m_+k^4\alpha^2+32\pi e^2T_+k^2\alpha^4}\xkh{T_-\alpha+4\pi e^2}\\
  &+256\pi^3e^6k^2\alpha^3+\xkh{128\pi^2e^4m_+k^4\alpha+64\pi^2e^4T_+k^2\alpha^3}\xkh{\alpha+\frac{4 \pi e^2}{T_-}}\\
  &-8\pi e^2(T_-)^2\alpha^4(\p_t\alpha)^2-\zkh{4m_+T_-k^2\alpha^2(\p_t\alpha)^2+2T_+T_-\alpha^4(\p_t\alpha)^2}\xkh{T_-\alpha+4\pi e^2}\\
  &-64\pi^2e^4T_-\alpha^3(\p_t\alpha)^2-\zkh{32\pi e^2m_+k^2\alpha(\p_t\alpha)^2+16\pi e^2T_+\alpha^3(\p_t\alpha)^2}\xkh{T_-\alpha+4\pi e^2}\\
  &-128\pi^3e^6\alpha^2(\p_t\alpha)^2-\zkh{64\pi^2e^4m_+k^2(\p_t\alpha)^2+32\pi^2e^4T_+\alpha^2(\p_t\alpha)^2}\xkh{\alpha+\frac{4 \pi e^2}{T_-}}\\
  &+\zkh{16 \pi e^2m_+k^2\alpha(\p_t\alpha)^2-8 \pi e^2T_+\alpha^3(\p_t\alpha)^2+\frac{32\pi^2e^4m_+k^2}{T_-}(\p_t\alpha)^2}\xkh{T_-\alpha+4\pi e^2}\\
  &+\zkh{\xkh{2m_+T_-k^2-16\pi^2e^4-\frac{16\pi^2e^4T_+}{T_-}}\alpha^2(\p_t\alpha)^2-T_+T_-\alpha^4(\p_t\alpha)^2}\xkh{T_-\alpha+4\pi e^2}
\end{flalign*}
is a polynomial in time of order 12. Denote by $t_i(i=1,2,...,12)$ the possible positive roots for $\ca G$. We set $i_0=0$ provided that $\ca{G}(0) \leq 0$ and take $i_0=1$ for the other cases. Assume that $t_0=0$ and that $t_{13}=+\infty$. By virtue of (\ref{fl:hg2}), we infer
\begin{flalign}
  \int^{+\infty}_0\abs{\frac{d}{dt}\xkh{\frac{h_{1}}{\g_{+}}}}d\vt
  &=\sum^{13}_{i=1}(-1)^{i+i_0}\xkh{\frac{h_{1}}{\g_{+}}(t_i)-\frac{h_{1}}{\g_{+}}(t_{i-1})}\nonumber\\
  &\leq2\sum^{13}_{i=0}\frac{|h_{1}|}{\g_{+}}(t_i) \leq 14\sqrt{2}.\label{fl:b13}
\end{flalign}For the second integral term, we deduce from (\ref{eq:lme}) that
\begin{flalign}
  \int^{+\infty}_0\abs{\frac{d}{dt}\xkh{\log\l_{+}}}d\vt&=\frac{1}{2}\int^{\frac{\ze}{k}}_0\abs{\frac{d}{dt}\xkh{\log(\l_{+})^2}}d\vt
  +\frac{1}{2}\int^{+\infty}_{\frac{\ze}{k}}\abs{\frac{d}{dt}\xkh{\log(\l_{+})^2}}d\vt\nonumber\\
  &\leq\frac{1}{2}\zkh{\xkh{\log(\l_{+})^2}\xkh{\frac{\ze}{k}}-\xkh{\log(\l_{+})^2}\xkh{0}}\nonumber\\
  &\quad+\frac{1}{2}\zkh{\xkh{\log(\l_{+})^2}\xkh{\frac{\ze}{k}}-\xkh{\log(\l_{+})^2}\xkh{+\infty}}\nonumber\\
  &\leq\log\xkh{1+\frac{4\pi e^2}{T_+}+\frac{2m_+}{T_+}},\label{fl:me0}
\end{flalign}where we have used the fact that
\begin{flalign*}
  \frac{1}{(\l_{+})^2}\frac{d (\l_{+})^2}{dt}
  &=(T_+)^{-1}(\l_{+})^{-2}\alpha^{-3}\xkh{\alpha+\frac{4\pi e^2}{T_-}}^{-2}\\
  &\quad\times\zkh{-4m_+k^2\xkh{\alpha+\frac{4\pi e^2}{T_-}}^2-4\pi e^2\alpha^3}\p_t\alpha=0
\end{flalign*}
has a unique root $t=\ze/k$. Applying Gr\"{o}nwall's inequality to (\ref{fl:dca}) and (\ref{fl:geq}), respectively, from (\ref{fl:b13}) and (\ref{fl:me0}) we obtain (\ref{eq:c1e}).

Next, it remains to prove (\ref{eq:c2a}). Owing to (\ref{eq:c1e}) and (\ref{eq:E22}), it can be seen that
\begin{equation}\label{eq:cae}
  C'_1\wi{\ca{E}}_{+}(0) \leq \wi{\ca{E}}_{+}(t) \leq C^{\delta'}_1\wi{\ca{E}}_{+}(0).
\end{equation}By virtue of (\ref{eq:lme}), we get
\begin{equation}\label{eq:wic}
  \xkh{1+\frac{4\pi e^2}{T_+}+\frac{2m_+}{T_+}}^{-1}\wi{\ca{E}}_{+}(t) \leq |A(t)|^2
  \leq \xkh{1+\frac{4\pi e^2}{T_+}+\frac{2m_+}{T_+}}\wi{\ca{E}}_{+}(t).
\end{equation}Combining (\ref{eq:cae}) with (\ref{eq:wic}) yields (\ref{eq:c2a}). This completes the proof.
\end{proof}

Based on Lemma \ref{le:aet}, we give the proof of Theorem \ref{th:uqp}.
\begin{proof}[Proof of Theorem \ref{th:uqp}]
Due to Lemma \ref{le:aet}, the solution $A(t)$ of system (\ref{eq:ddt}) can be bounded as
\begin{flalign}
  |A(t)|&=\abs{S_L(t,0)A^{in}+\int^t_0S_L(t,s)M_{+}(s)\ha{F^{in}_{+}}ds}\nonumber\\
  &\lesssim|A^{in}|+\abs{\ha{F^{in}_{+}}}\int^t_0\frac{2k^2}{\alpha^{7/4}}ds\nonumber\\
  &\lesssim|A^{in}|+\abs{\ha{F^{in}_{+}}}\int^\infty_0\frac{1}{|k|^{3/2}(1+(\ze/k-s)^2)^{7/4}}ds\nonumber\\
  &\lesssim|A^{in}|+\abs{\ha{F^{in}_{+}}}\label{fl:ata}
\end{flalign}for any $t\geq0$. By the Helmholtz decomposition and the Plancherel's theorem, we get from (\ref{fl:ata}) that
\begin{flalign}
  \norm{\bb{P}[u_{+}]^x(t)}^2_{L^2}&+\norm{\phi(t)}^2_{L^2}=\norm{\xkh{\p_y\la^{-1}\omega_{+}}(t)}^2_{L^2}
  +\norm{4\pi e\xkh{-\la+\frac{4\pi e^2}{T_-}}^{-1}\eta_{+}(t)}^2_{L^2}\nonumber\\
  &=\sum_k\int\frac{(\ze-kt)^2}{\alpha^2}\abs{\ha{\Gamma_{+}}(t)}^2d\ze
  +\sum_k\int16\pi^2e^2\xkh{\alpha+\frac{4\pi e^2}{T_-}}^{-2}\abs{\ha{\Pi_{+}}(t)}^2d\ze\nonumber\\
  &\lesssim\sum_k\int\xkh{\frac{(\ze-kt)^2}{\alpha^2}\abs{\ha{F^{in}_{+}}}^2
  +\frac{m_+(\ze-kt)^2}{T_+\alpha^{3/2}}\abs{A_1(t)}^2+\frac{m_+}{T_+\alpha^{3/2}}\abs{A_1(t)}^2}d\ze\nonumber\\
  &\lesssim\sum_k\int\xkh{\alpha^{-1}\abs{\ha{F^{in}_{+}}}^2+\frac{m_+}{T_+\alpha^{1/2}}\abs{A^{in}}^2
  +\frac{m_+}{T_+\alpha^{1/2}}\abs{\ha{F^{in}_{+}}}^2}d\ze\nonumber\\
  &\lesssim\sum_k\int\xkh{\frac{\lan{\ze}^2}{\lan{t}^2\lan{k}^2}\abs{\ha{F^{in}_{+}}}^2
  +\frac{m_+\lan{\ze}}{T_+\lan{t}\lan{k}}\abs{A^{in}}^2+\frac{m_+\lan{\ze}}{T_+\lan{t}\lan{k}}\abs{\ha{F^{in}_{+}}}^2}d\ze\nonumber\\
  &\lesssim\sum_k\int\xkh{\frac{\lan{\ze}^2}{\lan{t}^2\lan{k}^2}\abs{\ha{\eta^{in}_{+}}+\ha{\omega^{in}_{+}}}^2
  +\frac{m_+\lan{\ze}}{T_+\lan{t}\lan{k}}\abs{\ha{\eta^{in}_{+}}+\ha{\omega^{in}_{+}}}^2}d\ze\nonumber\\
  &\quad+\sum_k\int\frac{m_+\lan{\ze}}{T_+\lan{t}\lan{k}}\xkh{\frac{T_+}{m_+}\frac{\abs{\ha{\eta^{in}_{+}}}^2}{\xkh{1+\ze^2}^{1/2}}
  +\frac{\abs{\ha{\psi^{in}_{+}}}^2}{\xkh{1+\ze^2}^{3/2}}}d\ze,
\end{flalign}where we have used the fact that $\lan{t}\lesssim\lan{\ze/k-t}\lan{\ze/k}$. Thanks to the definition of the anisotropic Sobolev space, from the above inequality we deduce (\ref{fl:pux}). Moreover, a similar argument as that of (\ref{fl:pux}) yields (\ref{fl:puy}).

Using the Helmholtz decomposition and the Plancherel's theorem, we obtain
\begin{flalign}
  \norm{\bb{Q}[u_{+}](t)}^2_{L^2}+\frac{T_+}{m_+}\norm{\eta_{+}(t)}^2_{L^2}
  &=\norm{\xkh{\n\la^{-1}\psi_{+}}(t)}^2_{L^2}+\frac{T_+}{m_+}\norm{\eta_{+}(t)}^2_{L^2}\nonumber\\
  &=\norm{\xkh{\p_x\la^{-1}\psi_{+}}(t)}^2_{L^2}+\norm{\xkh{\p_y\la^{-1}\psi_{+}}(t)}^2_{L^2}+\frac{T_+}{m_+}\norm{\eta_{+}(t)}^2_{L^2}\nonumber\\
  &=\sum_k\int\xkh{\frac{1}{k^2+(\ze-kt)^2}\abs{\ha{\Psi_{+}}(t)}^2+\frac{T_+}{m_+}\abs{\ha{\Pi_{+}}(t)}^2}d\ze.\label{fl:ut2}
\end{flalign}Note that the fact that $k^2+(\ze-kt)^2\lesssim\lan{t}^2\lan{k,\ze}^2$. From (\ref{fl:ata}) and (\ref{fl:ut2}), it follows that
\begin{flalign}
  \norm{\bb{Q}[u_{+}](t)}^2_{L^2}+\frac{T_+}{m_+}\norm{\eta_{+}(t)}^2_{L^2}
  &=\sum_k\int\xkh{\frac{1}{\alpha}\abs{\ha{\Psi_{+}}(t)}^2+\frac{T_+}{m_+}\abs{\ha{\Pi_{+}}(t)}^2}d\ze\nonumber\\
  &=\sum_k\int\alpha^\frac{1}{2}\xkh{\abs{\alpha^{-\frac{3}{4}}\ha{\Psi_{+}}(t)}^2
  +\abs{\sqrt{\frac{T_+}{m_+}}\alpha^{-\frac{1}{4}}\ha{\Pi_{+}}(t)}^2}d\ze\nonumber\\
  &=\sum_k\int\alpha^\frac{1}{2}\abs{A(t)}^2d\ze\nonumber\\
  &\lesssim \lan{t}\xkh{\norm{\sqrt{\frac{T_+}{m_+}}\eta^{in}_{+}}^2_{L^2}
  +\norm{\psi^{in}_{+}}^2_{H^{-1}}+\norm{\eta^{in}_{+}+\omega^{in}_{+}}^2_{H^1}},\label{fl:bbq}
\end{flalign}which gives the estimate (\ref{fl:qut}).
\end{proof}

Now we prove Theorem \ref{th:que}.
\begin{proof}[Proof of Theorem \ref{th:que}]
Set
\begin{equation*}
\ca{R}\xkh{t,A^{in},F^{in}_{+}}=A^{in}+\int^t_0S_L(0,s)M_{+}(s)\ha{F^{in}_{+}}ds.
\end{equation*}According to (\ref{eq:srt}), we have
\begin{equation*}
  A(t)=S_L(t,0)A^{in}+\int^t_0S_L(t,s)M_{+}(s)\ha{F^{in}_{+}}ds
  =S_L(t,0)\ca{R}\xkh{t,A^{in},F^{in}_{+}}.
\end{equation*}By virtue of Lemma \ref{le:aet}, we obtain
\begin{equation*}
  |A(t)| \geq C|\ca{R}\xkh{t,A^{in},F^{in}_{+}}|.
\end{equation*}Using the identity in (\ref{fl:bbq}), we get from the above inequality that
\begin{flalign*}
  \norm{\bb{Q}[u_{+}](t)}^2_{L^2}+\frac{T_+}{m_+}\norm{\eta_{+}(t)}^2_{L^2}
  &=\sum_k\int\alpha^\frac{1}{2}\abs{A(t)}^2d\ze\\
  &\gtrsim\sum_k\int\alpha^\frac{1}{2}|\ca{R}\xkh{t,A^{in},F^{in}_{+}}|^2d\ze\\
  &\gtrsim\sum_k\int\lan{\ze-kt}|\ca{R}\xkh{t,A^{in},F^{in}_{+}}|^2d\ze\\
  &\gtrsim\sum_k\int\frac{\lan{kt}}{\lan{\ze}}|\ca{R}\xkh{t,A^{in},F^{in}_{+}}|^2d\ze,
\end{flalign*}where we have used the facts that $\alpha^\frac{1}{2}\geq\lan{\ze-kt}$ and $\lan{\ze-kt}\lan{\ze}\gtrsim\lan{kt}$. The definition of the anisotropic Sobolev space gives
\begin{equation*}
  \norm{\bb{Q}[u_{+}](t)}^2_{L^2}+\frac{T_+}{m_+}\norm{\eta_{+}(t)}^2_{L^2}
  \gtrsim\lan{t}\norm{\ca{R}\xkh{t,A^{in},F^{in}_{+}}}^2_{L^2_xH^{-\frac{1}{2}}_y}.
\end{equation*}This completes the proof.
\end{proof}

\section{Analysis for the electron dynamics}\label{se:4}
In this section, we consider the dynamics of the electron dynamics system \eqref{eepfl:eta}-\eqref{eepfl:ome}. Also using coordinate transformations \eqref{eq:xyt}, we define the functions
\begin{flalign}
  &\Pi_{-}(t,X,Y)=\eta_{-}(t,X+tY,Y),\label{eepfl:pit}\\
  &\Psi_{-}(t,X,Y)=\psi_{-}(t,X+tY,Y),\label{eepfl:sit}\\
  &\Gamma_{-}(t,X,Y)=\omega_{-}(t,X+tY,Y).\label{eepfl:gam}
\end{flalign}
Set $F_{-}(t,X,Y):=\Pi_{-}(t,X,Y)+\Gamma_{-}(t,X,Y)$. Combining (\ref{eepfl:eta}) with (\ref{eepfl:ome}), we have $\p_t F_{-}=0$, which yields
\begin{equation*}
  \Gamma_{-}(t,X,Y)=F^{in}_{-}(X,Y)-\Pi_{-}(t,X,Y),
\end{equation*}where $F^{in}_{-}=\eta^{in}_{-}+\omega^{in}_{-}$. According to (\ref{eepeq:pqu}), we obtain
\begin{flalign}
  U^y_{-}&=\xkh{\p_Y-t\p_X}\la^{-1}_L\Psi_{-}+\p_X\la^{-1}_L\Gamma_{-}\nonumber\\
  &=\xkh{\p_Y-t\p_X}\la^{-1}_L\Psi_{-}+\p_X\la^{-1}_L F^{in}_{-}-\p_X\la^{-1}_L \Pi_{-} \nonumber.
\end{flalign}In consequence, we can rewrite system (\ref{eepfl:eta})-(\ref{eepfl:ome}) as
\begin{flalign}
  &\p_t \Pi_{-}+\Psi_{-}=0,\label{eepfl:ptp}\\
  &\p_t \Psi_{-}+2\p_{XX}\la^{-1}_L F^{in}_{-}+2\p_X\xkh{\p_Y-t\p_X}\la^{-1}_L\Psi_{-}\nonumber\\
  &-\xkh{2\p_{XX}\la^{-1}_L-\frac{1}{m_{-}}\la_L+\frac{4 \pi e^2}{m_{-}}}\Pi_{-}=0.\label{eepfl:2me}
\end{flalign}

Taking the Fourier transform on system (\ref{eepfl:ptp})-(\ref{eepfl:2me}), we get the following system
\begin{flalign}
  &\p_t \ha{\Pi_{-}}+\ha{\Psi_{-}}=0,\label{eepfl:aps}\\
  &\p_t \ha{\Psi_{-}}+\frac{2k^2}{\alpha}\ha{F^{in}_{-}}-\frac{\p_t \alpha}{\alpha}\ha{\Psi_{-}}
  -\xkh{\frac{2k^2}{\alpha}+\frac{\alpha}{m_{-}}+\frac{4 \pi e^2}{m_{-}}}\ha{\Pi_{-}}=0.\label{eepfl:haf}
\end{flalign}
In order to study system (\ref{eepfl:aps})-(\ref{eepfl:haf}), we need to find a suitable symmetrization for this system. Define
\begin{equation}\label{eepeq:atx}
  B(t)=\xkh{B_1(t),B_2(t)}^{\top}=\xkh{\frac{\ha{\Pi_{-}}(t)}{\sqrt{m_{-}}\alpha^{1/4}},\frac{\ha{\Psi_{-}}(t)}{\alpha^{3/4}}}^{\top}.
\end{equation}
By a delicate calculation, $B(t)$ satisfy a non-autonomous 2D dynamical system
\begin{equation}\label{eepeq:ddt}
\begin{cases}
  \frac{d}{dt}B(t)=L_{-}(t)B(t)+M_{-}(t)\ha{F^{in}_{-}},\\
  B(0)=B^{in},
\end{cases}
\end{equation}
where
\begin{flalign}
  &L_{-}(t)=\begin{bmatrix}-\frac{1}{4}\alpha^{-1}\p_t\alpha & -\sqrt{\frac{1}{m_{-}}}\alpha^{1/2}\\
  \frac{4\pi e^2}{\sqrt{m_{-}}\alpha^{1/2}}+\frac{2k^2\sqrt{m_{-}}}{\alpha^{3/2}}
  +\sqrt{\frac{1}{m_{-}}}\alpha^{1/2} & \frac{1}{4}\alpha^{-1}\p_t\alpha\end{bmatrix},\\
  &M_{-}(t)=\xkh{0,-\frac{2k^2}{\alpha^{7/4}}}^{\top},~B^{in}=\xkh{\frac{\ha{\Pi^{in}_{-}}}{\sqrt{m_{-}}\xkh{k^2+\ze^2}^{1/4}},
  \frac{\ha{\Psi^{in}_{-}}}{\xkh{k^2+\ze^2}^{3/4}}}^{\top}.
\end{flalign}

By virtue of Duhamel's formula, the solution $B(t)$ to system (\ref{eepeq:ddt}) is given by
\begin{equation}\label{eepeq:srt}
  B(t)=S_L(t,0)B^{in}+\int^t_0S_L(t,s)M_{-}(s)\ha{F^{in}_{-}}ds.
\end{equation}Here, $S_L$ denotes the solution operator, satisfying the group property
\begin{equation*}
  S_L(t,0)S_L(0,s)=S_L(t,s)
\end{equation*}for any $t,s>0$. In consequence, it suffices to study the homogeneous problem of system (\ref{eepeq:ddt}).

\begin{lemma}\label{eeple:aet}
Denote by $B(t)$ a solution to system (\ref{eepeq:ddt}) with $\ha{F^{in}_{-}}=0$. Let
\begin{equation}
  \ca{E}_{-}(t)=\xkh{\sqrt{\frac{p_{2}}{m_{2}}}|B_1|^2}(t)
  +2\xkh{\frac{h_{2}}{\sqrt{m_{2}p_{2}}}\Re\xkh{B_1 \bar{B_2}}}(t)
  +\xkh{\sqrt{\frac{m_{2}}{p_{2}}}|B_2|^2}(t),
\end{equation}
where
\begin{flalign}
  &h_{2}(t)=\frac{1}{4}\alpha^{-1}\p_t\alpha,~m_{2}(t)=\sqrt{\frac{1}{m_{-}}}\alpha^{1/2},\label{eepfl:hmt}\\
  &p_{2}(t)=\frac{4\pi e^2}{\sqrt{m_{-}}\alpha^{1/2}}+\frac{2k^2\sqrt{m_{-}}}{\alpha^{3/2}}+\sqrt{\frac{1}{m_{-}}}\alpha^{1/2}.\label{eepfl:pmt}
\end{flalign}Then there exists positive constants $C_1$, $C^\sigma_1$, $C_2$, $C^\sigma_2$  that do not depend on $k$ and $\ze$ such that
\begin{equation}\label{eepeq:c1e}
  C_1\ca{E}_{-}(0) \leq \ca{E}_{-}(t) \leq C^\sigma_1\ca{E}_{-}(0),
\end{equation}and
\begin{equation}\label{eepeq:c2a}
  C_2\abs{B^{in}} \leq \abs{B(t)} \leq C^\sigma_2\abs{B^{in}}.
\end{equation}
\end{lemma}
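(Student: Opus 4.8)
The plan is to carry out the symmetrization-and-Gr\"{o}nwall argument of Lemma~\ref{le:aet} essentially verbatim; the only structural difference is that in the electron case the Poisson coupling enters \eqref{eepfl:haf} through the \emph{bounded} multiplier $\tfrac{4\pi e^2}{m_-}$ rather than an $\alpha$-dependent term, which makes the weights slightly simpler. Throughout we restrict to $k\neq0$, i.e.\ $|k|\geq1$ and hence $\alpha=k^2+(\ze-kt)^2\geq k^2\geq1$: the $x$-average ($k=0$) mode of $(\eta_-,\psi_-,\omega_-)$ has already been reduced to zero before Theorem~\ref{eepth:uqp}, and this restriction is in any case needed for the constants in \eqref{eepeq:c2a} to be uniform in $\ze$. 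First I would read off from \eqref{eepfl:hmt}--\eqref{eepfl:pmt}
\[
  \l_{-}:=\sqrt{\tfrac{p_{2}}{m_{2}}}=\Bigl(1+\tfrac{2m_-k^2}{\alpha^2}+\tfrac{4\pi e^2}{\alpha}\Bigr)^{1/2},\qquad
  \g_{-}:=\sqrt{m_{2}p_{2}}=\Bigl(\tfrac{\alpha}{m_-}+\tfrac{2k^2}{\alpha}+\tfrac{4\pi e^2}{m_-}\Bigr)^{1/2}=\Bigl(\tfrac{\alpha}{m_-}\Bigr)^{1/2}\l_{-},
\]
so that $1\leq(\l_{-})^2\leq1+4\pi e^2+2m_-$, and, using $|\p_t\alpha|\leq2|k|\sqrt\alpha$ together with $(\g_{-})^2\geq 2k^2/\alpha$, also $|h_{2}|/\g_{-}\leq\tfrac1{2\sqrt2}\leq\tfrac{\sqrt2}{2}$. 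With $\wi{\ca E}_{-}:=\l_{-}|B_1|^2+\l_{-}^{-1}|B_2|^2$, Young's inequality then gives $(1-\tfrac{\sqrt2}{2})\wi{\ca E}_{-}\leq\ca E_{-}\leq(1+\tfrac{\sqrt2}{2})\wi{\ca E}_{-}$, whence (together with the bound on $\l_{-}$) the coerciveness of $\ca E_{-}$.

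Next I would diagonalize the homogeneous system \eqref{eepeq:ddt} (i.e.\ $\ha{F^{in}_{-}}=0$) into
\[
  \l_{-}\tfrac{d}{dt}B_1=-h_{2}\l_{-}B_1-\g_{-}B_2,\qquad \l_{-}^{-1}\tfrac{d}{dt}B_2=\g_{-}B_1+h_{2}\l_{-}^{-1}B_2,
\]
using $m_{2}\l_{-}=p_{2}\l_{-}^{-1}=\g_{-}$. Testing the first equation against $\bar B_1$, the second against $\bar B_2$, taking real parts and adding, and then using $\tfrac{d}{dt}\Re(B_1\bar B_2)=p_{2}|B_1|^2-m_{2}|B_2|^2$ (a direct consequence of the ODEs) together with $p_{2}/\g_{-}=\l_{-}$ and $m_{2}/\g_{-}=\l_{-}^{-1}$, all the lossy terms cancel and one obtains $\l_{-}\tfrac{d}{dt}|B_1|^2+\tfrac{2h_{2}}{\g_{-}}\tfrac{d}{dt}\Re(B_1\bar B_2)+\l_{-}^{-1}\tfrac{d}{dt}|B_2|^2=0$, hence
\[
  \frac{d\ca E_{-}}{dt}=\bigl(\l_{-}|B_1|^2-\l_{-}^{-1}|B_2|^2\bigr)\frac{d}{dt}\log\l_{-}+2\Re(B_1\bar B_2)\,\frac{d}{dt}\Bigl(\frac{h_{2}}{\g_{-}}\Bigr),
\]
and therefore, by Young's inequality and $\ca E_{-}\sim\wi{\ca E}_{-}$,
\[
  \Bigl|\frac{d\ca E_{-}}{dt}\Bigr|\leq(2+\sqrt2)\Bigl(\Bigl|\tfrac{d}{dt}\tfrac{h_{2}}{\g_{-}}\Bigr|+\Bigl|\tfrac{d}{dt}\log\l_{-}\Bigr|\Bigr)\ca E_{-}.
\]
To close this via Gr\"{o}nwall's inequality it then remains to bound $\int_0^{\infty}\bigl|\tfrac{d}{dt}\log\l_{-}\bigr|\,d\vt$ and $\int_0^{\infty}\bigl|\tfrac{d}{dt}\tfrac{h_{2}}{\g_{-}}\bigr|\,d\vt$ uniformly in $(k,\ze)$.

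The first of these is short work: $\tfrac{d}{dt}(\l_{-})^2=-\p_t\alpha\,\bigl(\tfrac{4m_-k^2}{\alpha^3}+\tfrac{4\pi e^2}{\alpha^2}\bigr)$ has the sign of $-\p_t\alpha$, which vanishes only at $t=\ze/k$, so $(\l_{-})^2$ is increasing on $[0,\ze/k]$ and decreasing afterwards, with maximum $(\l_{-})^2|_{t=\ze/k}=1+\tfrac{2m_-+4\pi e^2}{k^2}\leq 1+4\pi e^2+2m_-$; since $(\l_{-})^2\geq1$ and $(\l_{-})^2\to1$ as $t\to\infty$, splitting $\int_0^\infty$ at $t=\ze/k$ gives $\int_0^{\infty}\bigl|\tfrac{d}{dt}\log\l_{-}\bigr|\,d\vt\leq\log(1+4\pi e^2+2m_-)$. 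The \textbf{main obstacle}, just as in Lemma~\ref{le:aet}, is the second integral; the point is that, writing $R(t):=\alpha\,(\alpha^2+4\pi e^2\alpha+2m_-k^2)=m_-\alpha(\g_{-})^2>0$, one has $\tfrac{h_{2}}{\g_{-}}=\tfrac{\sqrt{m_-}}{4}\,\tfrac{\p_t\alpha}{\sqrt R}$, and since $\p_{tt}\alpha=2k^2$,
\[
  \frac{d}{dt}\Bigl(\frac{h_{2}}{\g_{-}}\Bigr)=\frac{\sqrt{m_-}}{8}\cdot\frac{4k^2R-(\p_t\alpha)R'}{R^{3/2}},
\]
so $\tfrac{d}{dt}\tfrac{h_{2}}{\g_{-}}$ has the sign of the polynomial $\ca G_{-}:=4k^2R-(\p_t\alpha)R'$, which has degree at most $6$ in $t$ (exactly $6$, with leading coefficient $-8k^8$, for $k\neq0$) and hence at most six positive roots --- one order lower than the degree-$12$ polynomial in the ion case, the factor $\alpha+\tfrac{4\pi e^2}{T_-}$ being absent here. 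Carrying out this polynomial identification and pinning down its degree is where the bookkeeping lies.

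Granting that, the rest is routine: listing the positive roots of $\ca G_{-}$ as $t_1<\dots<t_N$ ($N\leq6$) and putting $t_0=0$, $t_{N+1}=+\infty$, the monotonicity of $\tfrac{h_{2}}{\g_{-}}$ on each $[t_{i-1},t_i]$ together with $|h_{2}|/\g_{-}\leq\tfrac{\sqrt2}{2}$ and $\tfrac{h_{2}}{\g_{-}}\to0$ as $t\to\infty$ gives $\int_0^{\infty}\bigl|\tfrac{d}{dt}\tfrac{h_{2}}{\g_{-}}\bigr|\,d\vt\leq2\sum_{i=0}^{N+1}\tfrac{|h_{2}|}{\g_{-}}(t_i)\leq (N+2)\sqrt2$. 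Feeding the two integral bounds into Gr\"{o}nwall's inequality applied to the differential inequality above yields \eqref{eepeq:c1e}. Finally, \eqref{eepeq:c2a} follows exactly as in Lemma~\ref{le:aet}: \eqref{eepeq:c1e} together with $\ca E_{-}\sim\wi{\ca E}_{-}$ gives $\wi{\ca E}_{-}(0)\sim\wi{\ca E}_{-}(t)$ with $(k,\ze)$-independent constants, and then $1\leq\l_{-}\leq(1+4\pi e^2+2m_-)^{1/2}$ gives $(1+4\pi e^2+2m_-)^{-1}\wi{\ca E}_{-}(t)\leq|B(t)|^2\leq(1+4\pi e^2+2m_-)\wi{\ca E}_{-}(t)$, whence $|B(t)|\sim|B^{in}|$.
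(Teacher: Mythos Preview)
Your proof is correct and follows essentially the same approach as the paper's: the same symmetrization via $\l_{-},\g_{-}$, the same energy identity for $\ca E_{-}$, the same polynomial root-count for $\tfrac{d}{dt}(h_2/\g_{-})$ (the paper writes the degree-$6$ numerator as $\ca M$), and the same Gr\"onwall closure and passage to \eqref{eepeq:c2a}. The only slips are cosmetic --- you use the weaker bound $|h_2|/\g_{-}\leq\sqrt2/2$ where the paper obtains $\sqrt2/4$, and the identity $R=m_-\alpha(\g_{-})^2$ should read $R=m_-\alpha^{2}(\g_{-})^2$ --- neither of which affects the argument.
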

\begin{proof}
Define
\begin{gather}
  \l_{-}=\sqrt{\frac{p_{2}}{m_{2}}}=\xkh{1+\frac{4\pi e^2}{\alpha}+\frac{2m_{-}k^2}{\alpha^2}}^{1/2},\label{eepga:lpm}\\
  \g_{-}=\sqrt{m_{2}p_{2}}=\xkh{\frac{4\pi e^2}{m_{-}}+\frac{2k^2}{\alpha}+\frac{\alpha}{m_{-}}}^{1/2}.\label{eepga:gmp}
\end{gather}By the definition of $\l_{-}$, it holds that
\begin{equation}\label{eepeq:lme}
  1 \leq (\l_{-})^2 \leq 1+4\pi e^2+2m_{-}.
\end{equation}
Moreover, we have
\begin{flalign}
  \frac{|h_{2}|}{\g_{-}}&=\frac{|\p_t \alpha|}{4\alpha}\xkh{\frac{4\pi e^2}{m_{-}}+\frac{2k^2}{\alpha}+\frac{\alpha}{m_{-}}}^{-1/2}\nonumber\\
  &=\frac{|\p_t \alpha|}{4\alpha}\xkh{\frac{m_{-}}{\alpha}}^{1/2}\xkh{1+\frac{4\pi e^2}{\alpha}+\frac{2m_{-}k^2}{\alpha^2}}^{-1/2}\nonumber\\
  &\leq\frac{|k|}{2\sqrt{\alpha}}\xkh{\frac{m_{-}}{\alpha}}^{1/2}\xkh{\frac{\alpha^2}{2m_{-}k^2}}^{1/2}=\frac{\sqrt{2}}{4},\label{eepfl:hg2}
\end{flalign}where we have used the fact that $|\p_t \alpha|\leq2|k|\sqrt{\alpha}$.

Setting
\begin{equation*}
  \wi{\ca{E}}_{-}(t)=\xkh{\l_{-}|B_1|^2}(t)+\xkh{\frac{1}{\l_{-}}|B_2|^2}(t).
\end{equation*}Then from (\ref{eepfl:hg2}) it follows that
\begin{equation}\label{eepeq:E22}
  \xkh{1-\frac{\sqrt{2}}{4}}\wi{\ca{E}}_{-}(t)\leq\ca{E}_{-}(t)\leq\xkh{1+\frac{\sqrt{2}}{4}}\wi{\ca{E}}_{-}(t).
\end{equation}Obviously, the coerciveness of $\ca{E}_{-}(t)$ is ensured by the above inequality and (\ref{eepeq:lme}).

According to (\ref{eepfl:hmt}), (\ref{eepfl:pmt}), (\ref{eepga:lpm}) and (\ref{eepga:gmp}), system (\ref{eepeq:ddt}) becomes
\begin{gather}
  \l_{-}\frac{d}{dt}B_1=-h_{2}\l_{-} B_1-\g_{-} B_2,\label{eepga:la1}\\
  \frac{1}{\l_{-}}\frac{d}{dt}B_2=\g_{-} B_1+\frac{h_{2}}{\l_{-}}B_2.\label{eepga:la2}
\end{gather}Multiplying equations (\ref{eepga:la1}) and (\ref{eepga:la2}) by $\bar{B_1}$ and $\bar{B_2}$, respectively, we obtain
\begin{equation}\label{eepeq:dl2}
  \frac{\l_{-}}{2}\frac{d}{dt}|B_1|^2+\frac{1}{2\l_{-}}\frac{d}{dt}|B_2|^2=-h_{2}\l_{-}|B_1|^2+\frac{h_{2}}{\l_{-}}|B_2|^2.
\end{equation}Note that
\begin{equation}\label{eepeq:hgd}
  \frac{h_{2}}{\g_{-}}\frac{d}{dt}\Re(B_1\bar{B_2})=h_{2}\l_{-}|B_1|^2-\frac{h_{2}}{\l_{-}}|B_2|^2.
\end{equation}Adding (\ref{eepeq:dl2}) to (\ref{eepeq:hgd}), we get
\begin{equation*}
  \l_{-}\frac{d}{dt}|B_1|^2+\frac{2h_{2}}{\g_{-}}\frac{d}{dt}\Re(B_1\bar{B_2})+\frac{1}{\l_{-}}\frac{d}{dt}|B_2|^2=0,
\end{equation*}which yields
\begin{equation*}
  \frac{d\ca{E}_{-}}{dt}=\l_{-}|B_1|^2\frac{d}{dt}\xkh{\log\l_{-}}+2\Re(B_1\bar{B_2})\frac{d}{dt}\xkh{\frac{h_{2}}{\g_{-}}}
  -\frac{1}{\l_{-}}|B_2|^2\frac{d}{dt}\xkh{\log\l_{-}}.
\end{equation*}
Using Young's inequality, from (\ref{eepeq:E22}) we deduce that
\begin{flalign}
  \frac{d\ca{E}_{-}}{dt}&\leq\xkh{\abs{\frac{d}{dt}\xkh{\frac{h_{2}}{\g_{-}}}}+\abs{\frac{d}{dt}\xkh{\log\l_{-}}}}\wi{\ca{E}}_{-}\nonumber\\
  &\leq\xkh{2+\frac{\sqrt{2}}{2}}\xkh{\abs{\frac{d}{dt}\xkh{\frac{h_{2}}{\g_{-}}}}+\abs{\frac{d}{dt}\xkh{\log\l_{-}}}}\ca{E}_{-}.\label{eepfl:dca}
\end{flalign}An argument similar to that of (\ref{eepfl:dca}) gives
\begin{equation}\label{eepfl:geq}
  \frac{d\ca{E}_{-}}{dt}\geq-\xkh{2+\frac{\sqrt{2}}{2}}\xkh{\abs{\frac{d}{dt}\xkh{\frac{h_{2}}{\g_{-}}}}+\abs{\frac{d}{dt}\xkh{\log\l_{-}}}}\ca{E}_{-}.
\end{equation}

For the purpose of applying Gr\"{o}nwall's inequality, we need to estimate the integral terms $\int^{+\infty}_0\abs{\frac{d}{dt}\xkh{\frac{h_{2}}{\g_{-}}}}d\vt$ and $\int^{+\infty}_0\abs{\frac{d}{dt}\xkh{\log\l_{-}}}d\vt$. First of all, we deal with the first one. By a direct calculation, we have
\begin{equation*}
  \frac{d}{dt}\xkh{\frac{h_{2}}{\g_{-}}}=\frac{\ca M}{8m_{-}(\g_{-})^3\alpha^3},
\end{equation*}where
\begin{equation*}
  \ca M=4m_{-}k^2(\g_{-})^2\alpha^2-2m_{-}(\g_{-})^2\alpha(\p_t\alpha)^2-\alpha^2(\p_t\alpha)^2+2m_{-}k^2(\p_t\alpha)^2
\end{equation*}is a polynomial in time of order 6. Denote by $t_i(i=1,2,...,6)$ the positive root for $\ca M$. We set $i_0=0$ provided that $\ca{M}(0) \leq 0$ and take $i_0=1$ for the other cases. Assume that $t_0=0$ and that $t_7=+\infty$. By virtue of (\ref{eepfl:hg2}), we conclude that
\begin{equation}\label{eepeq:ab7}
  \int^{+\infty}_0\abs{\frac{d}{dt}\xkh{\frac{h_{2}}{\g_{-}}}}d\vt=\sum^7_{i=1}(-1)^{i+i_0}\xkh{\frac{h_{2}}{\g_{-}}(t_i)-\frac{h_{2}}{\g_{-}}(t_{i-1})}
  \leq2\sum^7_{i=0}\frac{|h_{2}|}{\g_{-}}(t_i) \leq 4\sqrt{2}.
\end{equation}For the second integral term, we get from (\ref{eepeq:lme}) that
\begin{flalign}
  &\int^{+\infty}_0\abs{\frac{d}{dt}\xkh{\log\l_{-}}}d\vt
  =\frac{1}{2}\int^{\frac{\ze}{k}}_0\abs{\frac{d}{dt}\xkh{\log(\l_{-})^2}}d\vt
  +\frac{1}{2}\int^{+\infty}_{\frac{\ze}{k}}\abs{\frac{d}{dt}\xkh{\log(\l_{-})^2}}d\vt\nonumber\\
  &\qquad\leq\frac{1}{2}\zkh{\xkh{\log(\l_{-})^2}\xkh{\frac{\ze}{k}}-\xkh{\log(\l_{-})^2}\xkh{0}}
  +\frac{1}{2}\zkh{\xkh{\log(\l_{-})^2}\xkh{\frac{\ze}{k}}-\xkh{\log(\l_{-})^2}\xkh{+\infty}}\nonumber\\
  &\qquad\leq\log\xkh{1+4\pi e^2+2m_{-}}.\label{eepfl:me0}
\end{flalign}Here, we have used the fact that $\frac{1}{(\l_{-})^2}\frac{d (\l_{-})^2}{dt}=(\l_{-})^{-2}\alpha^{-3}\xkh{-4m_{-}k^2-4\pi e^2\alpha}\p_t\alpha$ changes sign only in $t=\ze/k$. Applying Gr\"{o}nwall's inequality to (\ref{eepfl:dca}) and (\ref{eepfl:geq}), respectively, from (\ref{eepeq:ab7}) and (\ref{eepfl:me0}) we obtain (\ref{eepeq:c1e}).

Next, it remains to prove (\ref{eepeq:c2a}). Owing to (\ref{eepeq:c1e}) and (\ref{eepeq:E22}), it can be deduced that
\begin{equation}\label{eepeq:cae}
  C'_1\wi{\ca{E}}_{-}(0) \leq \wi{\ca{E}}_{-}(t) \leq C^{\sigma'}_1\wi{\ca{E}}_{-}(0).
\end{equation}
By virtue of (\ref{eepeq:lme}), we have
\begin{equation}\label{eepeq:wic}
  \xkh{1+4\pi e^2+2m_{-}}^{-1}\wi{\ca{E}}_{-}(t) \leq |B(t)|^2
  \leq \xkh{1+4\pi e^2+2m_{-}}\wi{\ca{E}}_{-}(t).
\end{equation}
Combining (\ref{eepeq:cae}) with (\ref{eepeq:wic}) yields (\ref{eepeq:c2a}). This completes the proof.
\end{proof}

Based on Lemma \ref{eeple:aet}, we give the proof of Theorem \ref{eepth:uqp} as follows.
\begin{proof}[Proof of Theorem \ref{eepth:uqp}]
Thanks to Lemma \ref{eeple:aet}, an argument similar to that of (\ref{fl:ata}) yields
\begin{flalign}
  |B(t)|&=\abs{S_L(t,0)B^{in}+\int^t_0S_L(t,s)M_{-}(s)\ha{F^{in}_{-}}ds}\nonumber\\
  &\lesssim|B^{in}|+\abs{\ha{F^{in}_{-}}}\label{eepfl:ata}
\end{flalign}for any $t\geq0$. By virtue of the Helmholtz decomposition and the Plancherel's theorem, we deduce from (\ref{eepfl:ata}) that
\begin{flalign}
  \norm{\bb{P}[u_{-}]^x(t)}^2_{L^2}&+\norm{\phi(t)}^2_{L^2}=\norm{\xkh{\p_y\la^{-1}\omega_{-}}(t)}^2_{L^2}
  +\norm{4\pi e\xkh{-\la}^{-1}\eta_{-}(t)}^2_{L^2}\nonumber\\
  &=\sum_k\int\frac{(\ze-kt)^2}{\alpha^2}\abs{\ha{\Gamma_{-}}(t)}^2d\ze
  +\sum_k\int\frac{16\pi^2e^2}{\alpha^2}\abs{\ha{\Pi_{-}}(t)}^2d\ze\nonumber\\
  &\lesssim\sum_k\int\xkh{\frac{(\ze-kt)^2}{\alpha^2}\abs{\ha{F^{in}_{-}}}^2
  +\frac{m_{-}(\ze-kt)^2}{\alpha^{3/2}}\abs{B_1(t)}^2
  +\frac{m_{-}}{\alpha^{3/2}}\abs{B_1(t)}^2}d\ze\nonumber\\
  &\lesssim\sum_k\int\xkh{\alpha^{-1}\abs{\ha{F^{in}_{-}}}^2
  +\frac{m_{-}}{\alpha^{1/2}}\abs{B^{in}}^2
  +\frac{m_{-}}{\alpha^{1/2}}\abs{\ha{F^{in}_{-}}}^2}d\ze\nonumber\\
  &\lesssim\sum_k\int\xkh{\frac{\lan{\ze}^2}{\lan{t}^2\lan{k}^2}\abs{\ha{F^{in}_{-}}}^2
  +\frac{m_{-}\lan{\ze}}{\lan{t}\lan{k}}\abs{B^{in}}^2
  +\frac{m_{-}\lan{\ze}}{\lan{t}\lan{k}}\abs{\ha{F^{in}_{-}}}^2}d\ze,
\end{flalign}
in which we have used the fact that $\lan{t}\lesssim\lan{\ze/k-t}\lan{\ze/k}$. By the definition of the anisotropic Sobolev space, we obtain (\ref{eepfl:pux}) from the above inequality. Moreover, a similar argument as that of (\ref{eepfl:pux}) gives (\ref{eepfl:puy}).

Owing to the Helmholtz decomposition and the Plancherel's theorem, we conclude that
\begin{flalign}
  &\norm{\bb{Q}[u_{-}](t)}^2_{L^2}+\frac{1}{m_{-}}\norm{\eta_{-}(t)}^2_{L^2}
  =\norm{\xkh{\n\la^{-1}\psi_{-}}(t)}^2_{L^2}+\frac{1}{m_{-}}\norm{\eta_{-}(t)}^2_{L^2}\nonumber\\
  &\qquad=\norm{\xkh{\p_x\la^{-1}\psi_{-}}(t)}^2_{L^2}
  +\norm{\xkh{\p_y\la^{-1}\psi_{-}}(t)}^2_{L^2}+\frac{1}{m_{-}}\norm{\eta_{-}(t)}^2_{L^2}\nonumber\\
  &\qquad=\sum_k\int\xkh{\frac{1}{k^2+(\ze-kt)^2}\abs{\ha{\Psi_{-}}(t)}^2
  +\frac{1}{m_{-}}\abs{\ha{\Pi_{-}}(t)}^2}d\ze.\label{eepfl:ut2}
\end{flalign}
Note that the fact that $k^2+(\ze-kt)^2\lesssim\lan{t}^2\lan{k,\ze}^2$. From (\ref{eepfl:ata}) and (\ref{eepfl:ut2}), it holds that
\begin{flalign}
  &\norm{\bb{Q}[u_{-}](t)}^2_{L^2}+\frac{1}{m_{-}}\norm{\eta_{-}(t)}^2_{L^2}
  =\sum_k\int\xkh{\frac{1}{\alpha}\abs{\ha{\Psi_{-}}(t)}^2+\frac{1}{m_{-}}\abs{\ha{\Pi_{-}}(t)}^2}d\ze\nonumber\\
  &\qquad=\sum_k\int\alpha^\frac{1}{2}\xkh{\abs{\alpha^{-\frac{3}{4}}\ha{\Psi_{-}}(t)}^2
  +\abs{\sqrt{\frac{1}{m_{-}}}\alpha^{-\frac{1}{4}}\ha{\Pi_{-}}(t)}^2}d\ze
  =\sum_k\int\alpha^\frac{1}{2}\abs{B(t)}^2d\ze\nonumber\\
  &\qquad\lesssim \lan{t}\xkh{\norm{\sqrt{\frac{1}{m_{-}}}\eta^{in}_{-}}^2_{L^2}
  +\norm{\psi^{in}_{-}}^2_{H^{-1}}+\norm{\eta^{in}_{-}+\omega^{in}_{-}}^2_{H^1}},
\end{flalign}which yields estimate (\ref{eepfl:qut}).
\end{proof}

The proof of Theorem \ref{eeth:que} follows from a similar argument as that of Theorem \ref{th:que}. Hence we omit its proof here.

\paragraph{Acknowledgements.\MC\MC\MC}\small
\addcontentsline{toc}{section}{Acknowledgements}
The work of X. Pu was supported in part by the National Natural Science Foundation of China (11871172) and the Science and Technology Projects in Guangzhou (202201020132). The work of D. Bian was supported in part by the National Natural Science Foundation of China (12271032).

\paragraph{}\vspace{-10mm}
\addcontentsline{toc}{section}{References}

\end{document}